\def\hat{\widehat}
\renewcommand\bar{\overline}
\def\PP{{\mathbb P}}
\def\hat{\widehat}
\def\red{\mathop{\rm red}\nolimits}
\def\im{\mathop{\rm im}\nolimits}
\def\GL{\mathop{\rm GL}\nolimits}
\def\Res{\mathop{\rm Res}\nolimits}
\def\Gor{\mathop{\rm Gor}\nolimits}
\def\rank{\mathop{\rm rank}\nolimits}
\def\Jac{\mathop{\rm Jac}\nolimits}
\def\jac{\mathop{\rm jac}\nolimits}
\def\Gr{\mathop{\rm Gr}\nolimits}
\newcommand{\bA}{\mathbf{A}}
\newcommand{\bbf}{\mathbf{f}}
\newcommand{\bfGor}{\mathbf{Gor}}
\def\Char{\mathop{\rm char}\nolimits}
\def\SSMat{\mathop{\rm SSMat}\nolimits}
\newcommand\co{\colon} 
\newtheorem{theorem}{THEOREM}[section]
\newtheorem{proposition}[theorem]{Proposition}
\theoremstyle{definition}
\newtheorem{prop}[theorem]{Proposition}
\newtheorem{lemma}[theorem]{Lemma}
\newtheorem{definition}[theorem]{Definition}
\theoremstyle{remark}
\newtheorem{remark}[theorem]{Remark}
\renewcommand{\theequation}{\arabic{section}.\arabic{equation}}
\def\blfootnote{\xdef\@thefnmark{}\@footnotetext}
\begin{document}

\title[Smoothness of catalecticant schemes]{A combinatorial proof of\\\vspace{0.15cm} the smoothness of catalecticant schemes\\\vspace{0.15cm} associated to complete intersections}\blfootnote{{\bf Mathematics Subject Classification:} 14L24, 13A50.} \blfootnote{{\bf Keywords:} catalecticant schemes and varieties.}

\author[Isaev]{Alexander Isaev}

\address{Mathematical Sciences Institute\\
Australian National University\\
Acton, ACT 2601, Australia}
\email{alexander.isaev@anu.edu.au}

\maketitle

\thispagestyle{empty}

\pagestyle{myheadings}

\begin{abstract} For zero-dimensional complete intersections with homogeneous ideal generators of equal degrees over an algebraically closed field of characteristic zero, we give a combinatorial proof of the smoothness of the corresponding catalecticant schemes along an open subset of a particular irreducible component.
\end{abstract}

\section{Introduction}\label{intro}
\setcounter{equation}{0}

Catalecticant varieties and schemes were introduced by A.~Iarrobino and V.~Ka\-nev in their seminal monograph \cite{IK} in relation to the classical problem of representing a homogeneous form as a sum of powers of linear forms as well as related questions. Of special interest are the irreducible components and smoothness properties of such schemes; they have been extensively studied (see \cite[Chapter 4]{IK} and references therein for details). In the present paper, we focus on these topics for particular catalecticant schemes, which we call ${\mathbf V}$ and $\bfGor(T)$, associated to complete intersections with homogeneous ideal generators of equal degrees. Specifically, fix $n\ge 2$ and let $k[x_1, \ldots, x_n]_j$ denote the vector space of homogeneous forms of degree $j$ in $x_1,\dots,x_n$ over an algebraically closed field $k$ of characteristic zero. The schemes ${\mathbf V}$ and $\bfGor(T)$ then arise from considering zero-dimensional complete intersection $k$-algebras of the form
$$
M(\bbf) := k[x_1, \ldots, x_n] /(f_1, \ldots, f_n),
$$
where $\bbf=(f_1,\dots,f_n)$ is an $n$-tuple of elements of $k[x_1, \ldots, x_n]_d$ with fixed $d\ge 2$  (see Section \ref{schemes} for details).

Next, let $U_{\Res} \subset k[y_1, \ldots, y_n]_{n(d-1)}$ be the subset of each of ${\mathbf V}$ and $\bfGor(T)$ defined as the locus of forms $F$ such that the subspace $F^{\perp} \cap k[x_1, \ldots, x_n]_d$ is $n$-dimensional and has a basis with nonvanishing resultant, where 
$$
F^{\perp} := \left\{h\in k[x_1, \ldots, x_n]\,\Big| \, h\left(\frac{\partial}{\partial y_1}, \ldots, \frac{\partial}{\partial y_n}\right)F(y_1, \ldots, y_n) = 0 \right\}
$$
is the annihilator of $F$. In \cite[ Theorem 4.17]{IK} it was shown that $\bfGor(T)$ has an irreducible component containing $U_{\Res}$ as a dense subset and the dimension of this component was found. Furthermore, the smoothness of $\bfGor(T)$ at every point of $U_{\Res}$ follows from \cite[Theorem 1.4]{K} (cf.~\cite[p.~117--118]{IK}). On the other hand, analogous facts regarding ${\mathbf V}$ appear to be only known in the cases (i) $n=3$, $d\ge 3$, (ii) $n=4$, $d=2,3$, (iii) $n=5$, $d=2$ (see \cite[Theorem 4.19 and Corollary 4.18]{IK}), and one of the aims of the present paper is to bring results on ${\mathbf V}$ in line with those on $\bfGor(T)$.

In this article, we refine and extend Theorems 4.17 and 4.19 of \cite{IK}. First of all, in Theorem \ref{main8} obtained in Section \ref{schemes} we show that the set $U_{\Res}$ is open (not just dense) in an irreducible component of each of ${\mathbf V}$, $\bfGor(T)$ for all $n,d$ and explicitly describe the closed complement to $U_{\Res}$. As the proof of Theorem 4.17 in \cite{IK} is quite brief, we also provide an alternative derivation---with full details---of the dimension formula for $U_{\Res}$. One of the key elements of the proof of Theorem \ref{main8} is representing $U_{\Res}$ as the image of a certain morphism, called $\bA$, which is introduced and discussed in Section \ref{morphism}. This morphism assigns to every $n$-tuple of forms $\bbf\in k[x_1, \ldots, x_n]_d^{\oplus n}$ with nonzero resultant the so-called {\it associated form}\, lying in $k[y_1, \ldots, y_n]_{n(d-1)}$ and can be interpreted as mapping $\bbf$ to a particular homogeneous Macaulay inverse system of the standard graded Artinian Gorenstein algebra $M(\bbf)$. As explained in \cite{AI}, the morphism $\bA$ is also of interest in relation to a conjecture (not discussed here) linking complex singularity theory with classical invariant theory. We note that settling the conjecture requires studying ${\mathbf V}$ and $\bfGor(T)$, which explains our interest in these schemes.

The main content of the paper is an argument that establishes the smoothness of both ${\mathbf V}$ and $\bfGor(T)$ simultaneously at every point of $U_{\Res}$ for all $n,d$ (see Theorem \ref{main888} in Section \ref{results}). It extends the proof of Theorem 4.19 in \cite{IK}, which only works in special cases (i)--(iii) mentioned earlier. Our argument is direct and does not depend on results and techniques of \cite{K}. In particular, it requires neither Laudal's description of hulls nor results on the tangent and obstruction space for the scheme $GradAlg(H)$ (cf.~\cite[p.~610]{K}). Apart from the formulas for the dimensions of the tangent spaces to ${\mathbf V}$ and $\bfGor(T)$ obtained in \cite[Theorems 3.2, 3.9]{IK}, it relies just on the Koszul resolution of the algebra $M(\bbf)$. The novelty of the proof of Theorem \ref{main888} is that it proceeds by dimension count and is almost entirely combinatorial. In particular, we obtain a number of combinatorial identities that are independently interesting. We also note that, although the field $k$ is assumed to have zero characteristic, our arguments are easy to generalize to the case $\Char(k)>n(d-1)$, with $n(d-1)$ being the socle degree of $M(\bbf)$.

{\bf Acknowledgements.} Part of this work was done during the author's visit to the Bar-Ilan University, which we thank for its hospitality. We are also grateful to Marko Riedel for his help with establishing one of the combinatorial identities as specified in the appendix. Special thanks go to the referees for their thorough reading of the manuscript and for suggesting, in particular, an alternative proof of identity (\ref{desiredestim}) (see Remark \ref{alternpr} for details). We acknowledge the support of the Australian Research Council by way of Discovery Project DP140100296.

\section{Associated forms and Macaulay inverse systems}\label{morphism}
\setcounter{equation}{0}

In this section we introduce the so-called associated forms and the corresponding morphism, which will be useful for our study of the catalecticant schemes in the next section. What follows is an abridged version of the exposition given in \cite[Section 2]{AI}.

Let $k$ be an algebraically closed field, and we assume for simplicity that its characteristic is zero. Fix $n\ge 2$ and for any nonnegative integer $j$ define $k[x_1, \ldots, x_n]_j$ to be the vector space of homogeneous forms of degree $j$ in $x_1,\dots,x_n$ over $k$. Clearly, one has $k[x_1,\dots,x_n]=\oplus_{j=0}^{\infty}k[x_1, \ldots, x_n]_j$. Next, fix $d\ge 2$ and consider the vector space $k[x_1, \ldots, x_n]_d^{\oplus n}$ of $n$-tuples $\bbf = (f_1, \ldots, f_n)$ of forms of degree $d$.  Recall that the resultant $\Res$ on the space $k[x_1, \ldots, x_n]_d^{\oplus n}$ is a form with the property that $\Res(\bbf) \neq 0$ if and only if $f_1, \ldots, f_n$ have no common zeroes away from the origin (see, e.g.,  \cite[Chapter 13]{GKZ}).

For $\bbf = (f_1, \ldots, f_n) \in k[x_1, \ldots, x_n]_d^{\oplus n}$, we now introduce the algebra
$$
M(\bbf) := k[x_1, \ldots, x_n] / (f_1, \ldots, f_n)
$$
and recall a well-known lemma (see, e.g., \cite[Lemma 2.4]{AI} and \cite[p.~187]{SS}):

\begin{lemma}\label{fourconds} \it The following statements are equivalent:
\begin{enumerate}
	\item[\rm (1)] the resultant $\Res(\bbf)$ is nonzero;
	\item[\rm (2)] the algebra $M(\bbf)$ has finite vector space dimension;
	\item[\rm (3)] the morphism $\bbf \co {\mathbb A}^n(k) \to {\mathbb A}^n(k)$ is finite;
	\item[\rm (4)] the $n$-tuple $\bbf$ is a homogeneous system of parameters of $k[x_1,\dots,x_n]$, i.e., the Krull dimension of $M(\bbf)$ is $0$.
\end{enumerate}
If the above conditions are satisfied, then $M(\bbf)$ is a local standard graded complete intersection algebra whose socle is generated in degree $n(d-1)$ by the image\linebreak $\bar{\jac(\bbf)} \in M(\bbf)$ of the Jacobian $\jac(\bbf):= \det \Jac(\bbf)$, where $\Jac(\bbf)$ is the Jacobian matrix $\big({\partial f_i}/{\partial x_j} \big)_{i,j}$.\end{lemma}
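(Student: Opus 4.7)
The plan is to first establish the four equivalences and then derive the structural conclusions about $M(\bbf)$ and its socle.

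For the equivalences, I would begin from the geometric content of the resultant. Condition (1) is equivalent by definition to the statement that $f_1,\ldots,f_n$ have no common zero in $\mathbb{A}^n(k)\setminus\{0\}$. Since each $f_i$ is homogeneous of positive degree, every common zero locus contains the origin, so (1) is equivalent to $V(f_1,\ldots,f_n)=\{0\}$ set-theoretically, i.e.\ to the Krull dimension of $M(\bbf)$ being zero, which is (4). The equivalence of (2) and (4) is standard for finitely generated graded $k$-algebras with $M(\bbf)_0=k$: finite-dimensionality of $M(\bbf)$ as a $k$-vector space coincides with Artinianness, which for such algebras is in turn equivalent to Krull dimension zero. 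For (3), I would use the fact that $\bbf\co \mathbb{A}^n\to\mathbb{A}^n$ is finite iff $k[x_1,\ldots,x_n]$ is a finite $k[y_1,\ldots,y_n]$-module via $y_i\mapsto f_i$; by the graded Nakayama lemma this is equivalent to the fiber $M(\bbf)=k[x_1,\ldots,x_n]\otimes_{k[y_1,\ldots,y_n]}k$ being finite-dimensional over $k$, which is (2).

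Under these equivalent conditions $\bbf$ is a homogeneous system of parameters in the Cohen--Macaulay polynomial ring $k[x_1,\ldots,x_n]$, so $f_1,\ldots,f_n$ form a regular sequence. This makes $M(\bbf)$ a standard graded complete intersection, hence Gorenstein with one-dimensional socle; localness follows because the only prime of $M(\bbf)$ is the image of the irrelevant ideal $(x_1,\ldots,x_n)$. The socle degree can then be read off from the Hilbert series, which by the Koszul resolution of $k[x_1,\ldots,x_n]/(f_1,\ldots,f_n)$ equals
\[
\mathrm{HS}(M(\bbf);t) = \frac{(1-t^d)^n}{(1-t)^n} = \prod_{i=1}^n\bigl(1+t+\cdots+t^{d-1}\bigr),
\]
whose top-degree term is $t^{n(d-1)}$. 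Therefore $M(\bbf)_j=0$ for $j>n(d-1)$ while $\dim_k M(\bbf)_{n(d-1)}=1$, placing the socle in degree $n(d-1)$.

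It remains to identify the socle generator with $\bar{\jac(\bbf)}$. Since $\jac(\bbf)$ is homogeneous of degree $n(d-1)$ and the socle is one-dimensional in that degree, it suffices to verify that $\bar{\jac(\bbf)}\neq 0$ in $M(\bbf)$. My preferred approach is the classical Jacobian/residue argument: under the finite flat cover $\bbf\co \mathbb{A}^n\to\mathbb{A}^n$ the Jacobian represents the relative dualizing class, and the Grothendieck residue (equivalently, the Euler--Jacobi trace formula) yields a perfect $k$-bilinear pairing on $M(\bbf)$ under which $\bar{\jac(\bbf)}$ pairs nontrivially with the unit. I expect this last identification to be the main obstacle; the rest of the lemma is essentially a recollection of standard graded commutative algebra, and indeed this is presumably why the statement is quoted here with references to \cite{AI} and \cite{SS} rather than proved in full.
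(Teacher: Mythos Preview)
Your sketch is correct. The paper itself does not give a proof of this lemma: it is stated as a ``well-known'' fact with references to \cite[Lemma 2.4]{AI} and \cite[p.~187]{SS}, exactly as you surmised in your final sentence. Your argument for the equivalences $(1)\Leftrightarrow(2)\Leftrightarrow(3)\Leftrightarrow(4)$ via the geometry of the resultant, graded Nakayama, and the standard characterization of Artinian graded $k$-algebras is the usual one, and your computation of the socle degree from the Koszul Hilbert series matches what the paper records in Remark~\ref{hilbert}. The identification of $\bar{\jac(\bbf)}$ as the socle generator via the Scheja--Storch trace/residue pairing is precisely the content of the reference \cite{SS} that the paper points to, so you have correctly located both the substance and the source of the nontrivial step.
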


\begin{remark}\label{hilbert}
As we pointed out in Lemma \ref{fourconds}, the algebra $M(\bbf)$ has a natural standard grading: $M(\bbf) = \bigoplus_{i=0}^{\infty} M(\bbf)_i$.  It is well-known (see, e.g., \cite[Corollary 3.3]{S}) that the corresponding Hilbert function $H(u):=\sum_{i=0}^{\infty}t_i\,u^i$, with $t_i:=\dim_k{M}({\mathbf f})_i$, is given by
\begin{equation}
H(u)=(u^{d-1}+\dots+u+1)^n.\label{hf}
\end{equation}
\end{remark}

Next, we let $(k[x_1, \ldots, x_n]_d^{\oplus n})_{\Res}$ be the affine open subvariety of $k[x_1, \ldots, x_n]_d^{\oplus n}$ that consists of all $n$-tuples of forms with nonzero resultant. We now define the {\it associated form}\, $\bA(\bbf) \in k[y_1, \ldots, y_n]_{n(d-1)}$ of $\bbf=(f_1,\dots,f_n) \in (k[x_1, \ldots, x_n]_d^{\oplus n})_{\Res}$ by the formula
$$(y_1 \bar{x}_1 + y_2 \bar{x}_2 + \cdots + y_n \bar{x}_n)^{n(d-1)} = \bA(\bbf)(y_1, \ldots, y_n) \cdot \bar{\jac(\bbf)},$$
where $\bar{x}_i \in M(\bbf)$ is the image of $x_i$. It is not hard to see that the induced map
$$
\bA \co (k[x_1, \ldots, x_n]_d^{\oplus n})_{\Res} \to k[y_1, \ldots, y_n]_{n(d-1)}, \quad \bbf \mapsto \bA(\bbf)
$$
is a morphism of affine varieties. In article \cite{AI} we studied $\bA$ in relation to a conjecture linking complex singularity theory with classical invariant theory.

The morphism $\bA$ is quite natural; in particular, it possesses an important equivariance property, which we will now state. First, notice that for any $j$ the group $\GL_n \times \GL_n$ acts on the vector space $k[x_1, \ldots, x_n]_j^{\oplus n}$ via
$$
((g_1, g_2)  {\mathbf f}) (x) := {\mathbf f} (x \cdot g_1^{-t}) \cdot g_2^{-1}\label{doubleaction}
$$
for $g_1,g_2 \in \GL_n$, $x:=(x_1,\dots,x_n)$ and $\bbf\in k[x_1, \ldots, x_n]_j^{\oplus n}$. Also, for any $\ell$ the group $\GL_n$ act on the space $k[y_1, \ldots, y_n]_{\ell}$ via
$$
(g f) (y) := f (y \cdot g^{-t})\label{doubleaction1}
$$
for $g \in \GL_n$, $y:=(y_1,\dots,y_n)$ and $f\in k[y_1, \ldots, y_n]_{\ell}$. We then have (see \cite[Lemma 2.7]{AI}):  

\begin{lemma}\label{L:equiv2} \it For every ${\mathbf f}\in (k[x_1, \ldots, x_n]_d^{\oplus n})_{\Res}$ and $g_1,g_2 \in \GL_n$ the following holds:
\begin{equation} \label{E:equiv2}
\displaystyle{\mathbf A}((g_1,g_2) \bbf)=\det(g_1 g_2) \cdot g_1^{-t} {\mathbf A}({\mathbf f}).
\end{equation}
\end{lemma}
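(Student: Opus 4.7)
The plan is to exploit the fact that the action of $(g_1,g_2)$ on $\bbf$ changes the ideal $(f_1,\ldots,f_n)$ only through the substitution $x\mapsto x\cdot g_1^{-t}$; multiplication on the right by $g_2^{-1}$ simply replaces $(f_1,\ldots,f_n)$ by an invertible $k$-linear combination of itself and so leaves the ideal intact. Writing $\bbf':=(g_1,g_2)\bbf$, the $k$-algebra automorphism $\phi$ of $k[x_1,\ldots,x_n]$ sending $h(x)$ to $h(x\cdot g_1^{-t})$ therefore descends to a graded $k$-algebra isomorphism $\Phi\co M(\bbf)\iso M(\bbf')$. I would then apply $\Phi$ to the defining identity of $\bA(\bbf)$ and match the output with the defining identity of $\bA(\bbf')$.

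Two preparatory computations are required. A direct application of the chain rule gives $\Jac(\bbf')(x)=g_2^{-t}\cdot\Jac(\bbf)(x\cdot g_1^{-t})\cdot g_1^{-1}$, and taking determinants yields $\jac(\bbf')(x)=\det(g_1g_2)^{-1}\cdot\jac(\bbf)(x\cdot g_1^{-t})$. Consequently $\Phi(\overline{\jac(\bbf)})=\det(g_1g_2)\cdot\overline{\jac(\bbf')}$ inside $M(\bbf')$. Separately, writing $\bar x_j'\in M(\bbf')$ for the image of $x_j$, the identity $\phi(x_i)=\sum_j(g_1^{-1})_{ij}x_j$ rearranges to $\sum_i y_i\Phi(\bar x_i)=\sum_j z_j\bar x_j'$, where $z:=y\cdot g_1^{-1}$.

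Applying $\Phi$ to the defining identity $(y_1\bar x_1+\cdots+y_n\bar x_n)^{n(d-1)}=\bA(\bbf)(y)\cdot\overline{\jac(\bbf)}$ and comparing the outcome with the defining identity for $\bA(\bbf')$ evaluated at $z$ in place of $y$ will produce
$$
\bA(\bbf')(z)\cdot\overline{\jac(\bbf')}=\det(g_1g_2)\cdot\bA(\bbf)(y)\cdot\overline{\jac(\bbf')}.
$$
By Lemma \ref{fourconds} the socle $M(\bbf')_{n(d-1)}$ is one-dimensional and generated by $\overline{\jac(\bbf')}$, so I can cancel it to obtain $\bA(\bbf')(y\cdot g_1^{-1})=\det(g_1g_2)\cdot\bA(\bbf)(y)$. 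Substituting $y\mapsto y\cdot g_1$ and noting that $(g_1^{-t}\bA(\bbf))(y)=\bA(\bbf)(y\cdot g_1)$ by the very definition of the $\GL_n$-action on $k[y_1,\ldots,y_n]_{n(d-1)}$ will yield (\ref{E:equiv2}).

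The only real obstacle is bookkeeping: two $\GL_n$-actions, several transposes, and left-versus-right matrix multiplications all have to be tracked in the correct order. Once $\Phi$ is identified and the one-dimensionality of the socle is invoked, the algebra collapses to the single variable substitution described above.
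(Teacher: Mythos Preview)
Your argument is correct: the key observations---that right multiplication by $g_2^{-1}$ leaves the ideal unchanged, that the substitution $x\mapsto x\cdot g_1^{-t}$ induces a graded isomorphism $\Phi\co M(\bbf)\iso M(\bbf')$, the chain-rule computation of $\jac(\bbf')$, and the rewriting $\sum_i y_i\Phi(\bar x_i)=\sum_j z_j\bar x_j'$ with $z=y\cdot g_1^{-1}$---are all accurate, and the cancellation of $\overline{\jac(\bbf')}$ via Lemma~\ref{fourconds} together with the final substitution $y\mapsto y\cdot g_1$ yields exactly (\ref{E:equiv2}).

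Note that the paper does not actually supply a proof of this lemma; it simply cites \cite[Lemma 2.7]{AI}. Your write-up therefore fills in precisely the argument the present paper omits, and it is in fact the natural one: unwind the definition of $\bA$ through the induced isomorphism of quotient algebras and track how the two distinguished elements (the linear form $\sum y_i\bar x_i$ and the socle generator $\overline{\jac(\bbf)}$) transform. There is nothing to correct.
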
 

We will now interpret $\bA$ in different terms. Recall that the algebra $k[y_1, \ldots, y_n]$ is a $k[x_1, \ldots, x_n]$-module via differentiation:
\begin{equation}
(h \circ F) (y_1, \ldots, y_n) := h\left(\frac{\partial}{\partial y_1}, \ldots, \frac{\partial}{\partial y_n}\right)F(y_1, \ldots, y_n),\label{pp}
\end{equation}
where $h \in k[x_1, \ldots, x_n]$ and $F \in k[y_1, \ldots, y_n]$.  For a positive integer $j$, differentiation induces a perfect pairing
$$ 
k[x_1, \ldots, x_n]_j \times k[y_1, \ldots, y_n]_j \to k, \quad (h, F) \mapsto h \circ F;
$$
it is often referred to as the {\it polar pairing}.  For $F \in k[y_1, \ldots, y_n]_j$, we now introduce a homogeneous ideal, called the annihilator of $F$, as follows:
$$
F^{\perp} := \{h\in k[x_1, \ldots, x_n]\, \mid \, h \circ F = 0 \},
$$
which is clearly independent of scaling and thus is well-defined for $F$ in the projective space $\PP(k[y_1, \ldots, y_n]_j)$. It is well-known that the quotient $k[x_1, \ldots, x_n] / F^{\perp}$ is a standard graded local Artinian Gorenstein algebra of socle degree $j$ and the following holds (cf.~\cite[Lemma 2.12]{IK}):

\begin{proposition} \label{prop-correspondence}
The correspondence $F \mapsto k[x_1, \ldots, x_n]/F^{\perp}$ induces a bijection
$$
\PP(k[y_1, \ldots, y_n]_j)  \to
\left\{ 
 	\begin{array}{l} 
		\text{local Artinian Gorenstein algebras $k[x_1, \ldots, x_n]/I$}\\ 
		\text{of socle degree $j$, where the ideal $I$ is homogeneous}\\
		 \end{array} \right\}.
$$
\end{proposition}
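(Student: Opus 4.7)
My plan is to check three things: well-definedness of the map (the quotient really is local Artinian Gorenstein of socle degree $j$), that it descends to $\PP(k[y_1,\ldots,y_n]_j)$ and is injective there, and that it is surjective. The engine throughout is the perfect polar pairing between $k[x_1,\ldots,x_n]_i$ and $k[y_1,\ldots,y_n]_i$, together with the standard Gorenstein duality for the ambient quotient.

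\textbf{Well-definedness.} Since $F$ is homogeneous of degree $j$ and differentiating by a form of degree $i$ drops the degree by $i$ (and kills the form when $i>j$), the ideal $F^\perp$ is homogeneous and contains $k[x_1,\ldots,x_n]_i$ for every $i>j$. Thus $A := k[x_1,\ldots,x_n]/F^\perp$ is standard graded and concentrated in degrees $0,\dots,j$, hence local Artinian. Perfectness of the polar pairing in degree $j$, together with $F\ne 0$, makes $(F^\perp)_j$ a hyperplane in $k[x_1,\ldots,x_n]_j$, so $\dim A_j = 1$. To pin down the socle to degree $j$, I would note that if homogeneous $h$ of degree $i<j$ satisfies $(x_\alpha h)\circ F = 0$ for every $\alpha$, then $(\partial/\partial y_\alpha)(h\circ F) = 0$ for every $\alpha$, so the positive-degree form $h\circ F$ vanishes and $h\in F^\perp$. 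This shows $\Soc(A) = A_j$ is one-dimensional, so $A$ is Gorenstein of socle degree $j$. Rescaling $F$ leaves $F^\perp$ unchanged, so the correspondence descends to the projective space.

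\textbf{Injectivity and surjectivity.} If $F_1^\perp = F_2^\perp$, then both $F_1,F_2$ lie in the one-dimensional annihilator of the common hyperplane $(F_i^\perp)_j$ under the perfect polar pairing, so they are proportional. For surjectivity, given a homogeneous ideal $I$ with $A = k[x_1,\ldots,x_n]/I$ graded Artinian Gorenstein of socle degree $j$, I would pick $F$ to span the one-dimensional annihilator of $I_j$; a dimension count then gives $(F^\perp)_j = I_j$. The inclusion $I \subset F^\perp$ is immediate degree by degree: for $h \in I_i$ and arbitrary $g \in k[x_1,\ldots,x_n]_{j-i}$, the element $gh$ lies in $I_j$, so $g \circ (h\circ F) = (gh)\circ F = 0$, and letting $g$ vary with the perfect polar pairing in degree $j-i$ forces $h\circ F = 0$. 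The reverse inclusion $F^\perp \subset I$ is where the Gorenstein hypothesis enters: if $h \in (F^\perp)_i$, the same manipulation yields $gh \in (F^\perp)_j = I_j$ for every $g$ of degree $j-i$, i.e.\ $[h]\cdot[g]=0$ in $A_j$ for every $[g]\in A_{j-i}$, whence the perfect multiplication pairing $A_i \times A_{j-i} \to A_j \simeq k$ forces $[h] = 0$, so $h\in I$.

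\textbf{Main obstacle.} The only non-formal ingredient is the perfect Gorenstein pairing on $A$ used in the last inclusion of the surjectivity step; this is the standard Macaulay/Matlis duality for graded Artinian local Gorenstein rings and may be cited rather than reproved (cf.\ \cite[Lemma 2.12]{IK}). All the remaining work is bookkeeping that translates perfectness of the polar pairing on the ambient symmetric algebra into properties of $F^\perp$ after choosing $F$ to span $(I_j)^\perp \subset k[y_1,\ldots,y_n]_j$.
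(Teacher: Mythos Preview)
The paper does not actually prove this proposition: it is stated as a well-known fact with the parenthetical citation ``(cf.\ \cite[Lemma 2.12]{IK})'' and no argument is given. Your proposal is a correct and complete proof of exactly the kind one finds in standard references, so there is nothing to compare against in the paper itself beyond noting that you have supplied what the paper only cites.

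Your argument is sound in all three parts. The well-definedness step correctly identifies the socle via the observation that $x_\alpha h\in F^\perp$ for all $\alpha$ forces $h\circ F$ to be a form of positive degree with all partials vanishing, hence zero. Injectivity is immediate from perfectness of the polar pairing in degree $j$. For surjectivity, your two inclusions are handled exactly right: $I\subset F^\perp$ needs only that $I$ is an ideal and the ambient polar pairing is perfect, while $F^\perp\subset I$ is precisely where the perfect multiplication pairing $A_i\times A_{j-i}\to A_j$ of a graded Artinian Gorenstein algebra is invoked, and you flag this appropriately as the one substantive input. This is the standard Macaulay inverse systems argument, and it matches what one finds in \cite[Lemma 2.12]{IK}.
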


\begin{remark}\label{invsysrem}
\noindent Given a homogenous ideal $I \subset k[x_1, \ldots, x_n]$ such that $k[x_1, \ldots, x_n]/I$ is a local Artinian Gorenstein algebra of socle degree $j$, Proposition \ref{prop-correspondence} implies that there is a  form $F \in  k[y_1, \ldots, y_n]_j$, unique up to scaling, such that $I = F^{\perp}$. In fact, the uniqueness part of this statement can be strengthened: if $I\subset F^{\perp}$, then $I = F^{\perp}$ and all forms with this property are mutually proportional. Indeed, $I\subset F^{\perp}$ implies $I_j\subset F^{\perp}$, where $I_j:=I\cap k[x_1, \ldots, x_n]_j$, and the claim follows from the fact that $I_j$ has codimension 1 in $k[x_1, \ldots, x_n]_j$. Any such form $F$ is called {\it a {\rm (}homogeneous{\rm )} Macaulay inverse system for $k[x_1, \ldots, x_n]/I$} and its image in $\PP(k[y_1, \ldots, y_n]_j)$ is called {\it the {\rm (}homogeneous{\rm )} Macaulay inverse system for $k[x_1, \ldots, x_n]/I$}.
\end{remark}

We have (see \cite[Proposition 2.11]{AI}):

\begin{prop} \label{P:inverse-system} \it
For any $\bbf \in (k[x_1, \ldots, x_n]_{d}^{\oplus n})_{\Res}$, 
the form $\bA(\bbf)$ is a Macau\-lay inverse system for the algebra $M(\bbf)$.
\end{prop}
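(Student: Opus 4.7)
The plan is to show directly that the ideal $I := (f_1, \ldots, f_n)$ equals the annihilator $\bA(\bbf)^{\perp}$. By Lemma \ref{fourconds}, the algebra $M(\bbf) = k[x_1, \ldots, x_n]/I$ is local Artinian Gorenstein of socle degree $n(d-1)$, which coincides with $\deg \bA(\bbf)$, so Remark \ref{invsysrem} reduces the task to establishing the inclusion $I \subset \bA(\bbf)^{\perp}$. Since $\bA(\bbf)^{\perp}$ is an ideal of $k[x_1, \ldots, x_n]$ and $k[y_1, \ldots, y_n]$ is a $k[x_1, \ldots, x_n]$-module under (\ref{pp}), it is enough to verify that $f_i \circ \bA(\bbf) = 0$ for $i = 1, \ldots, n$.

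The key step is to apply the differential operator $f_i(\partial/\partial y_1, \ldots, \partial/\partial y_n)$ to both sides of the defining identity
$$(y_1 \bar{x}_1 + \cdots + y_n \bar{x}_n)^{n(d-1)} = \bA(\bbf)(y_1, \ldots, y_n) \cdot \overline{\jac(\bbf)},$$
viewed as an equality in $M(\bbf)[y_1, \ldots, y_n]$. On the left, the elementary identity
$$h\!\left(\frac{\partial}{\partial y_1}, \ldots, \frac{\partial}{\partial y_n}\right)(v_1 y_1 + \cdots + v_n y_n)^N = \frac{N!}{(N-\deg h)!}\, h(v_1, \ldots, v_n) \cdot (v_1 y_1 + \cdots + v_n y_n)^{N-\deg h},$$
valid for any homogeneous $h$ of degree at most $N$ and any $v_1, \ldots, v_n$ in a commutative ring, applied with $v_j = \bar{x}_j$ produces a multiple of $f_i(\bar{x}_1, \ldots, \bar{x}_n) = \bar{f}_i$, which vanishes in $M(\bbf)$ by construction. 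On the right, the differential operator commutes with multiplication by the $y$-constant element $\overline{\jac(\bbf)}$, yielding $(f_i \circ \bA(\bbf))(y_1, \ldots, y_n) \cdot \overline{\jac(\bbf)}$.

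Equating the two sides gives $(f_i \circ \bA(\bbf))(y_1, \ldots, y_n) \cdot \overline{\jac(\bbf)} = 0$ in $M(\bbf)[y_1, \ldots, y_n]$. Since the coefficients of $f_i \circ \bA(\bbf)$ lie in $k$ and $\overline{\jac(\bbf)}$ is a nonzero element of $M(\bbf)$ (it generates the socle by Lemma \ref{fourconds}), each such coefficient must vanish, so $f_i \circ \bA(\bbf) = 0$, completing the reduction. I do not foresee any substantive obstacle: the only point requiring attention is the rigorous handling of the polynomial identity over the quotient ring $M(\bbf)$, but this is routine, and the final promotion of containment to equality is a direct appeal to Remark \ref{invsysrem}.
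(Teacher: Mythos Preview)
Your argument is correct. The paper does not actually give a proof of this proposition; it simply cites \cite[Proposition 2.11]{AI}. What you have written is a clean, self-contained derivation that uses only the defining identity for $\bA(\bbf)$, the elementary differentiation formula for powers of a linear form, and Remark~\ref{invsysrem}. Each step checks out: the left-hand side vanishes because $f_i(\bar{x}_1,\ldots,\bar{x}_n)=\bar{f}_i=0$ in $M(\bbf)$; on the right the socle generator $\overline{\jac(\bbf)}$ is nonzero, so the scalar coefficients of $f_i\circ\bA(\bbf)$ are forced to vanish; and then Remark~\ref{invsysrem} upgrades $I\subset\bA(\bbf)^{\perp}$ to equality since $M(\bbf)$ is Gorenstein of socle degree $n(d-1)=\deg\bA(\bbf)$. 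In short, you have supplied the argument the paper outsources to \cite{AI}, and it is essentially the natural one.
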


\noindent By Proposition \ref{P:inverse-system}, the morphism $\bA$ can be thought of as a map assigning to every element $\bbf \in (k[x_1, \ldots, x_n]_{d}^{\oplus n})_{\Res}$ a particular Macaulay inverse system for the algebra $M(\bbf)$.

We now let $U_{\Res} \subset k[y_1, \ldots, y_n]_{n(d-1)}$ be the locus of forms $F$ such that the subspace $F^{\perp} \cap k[x_1, \ldots, x_n]_d$ is $n$-dimensional and has a basis with nonvanishing resultant. It is easy to see that $U_{\Res}$ is locally closed in $k[y_1, \ldots, y_n]_{n(d-1)}$, hence is a variety (see, e.g., Proposition \ref{onto1} below for details). By Proposition \ref{P:inverse-system}, the image of $\bA$ is contained in $U_{\Res}$. Moreover, if $F \in U_{\Res}$, then for the ideal $I\subset k[x_1, \ldots, x_n]$ generated by $F^{\perp} \cap k[x_1, \ldots, x_n]_{d}$, we have the inclusion $I \subset F^{\perp}$. By Remark \ref{invsysrem}, the form $F$ is the inverse system for $k[x_1, \ldots, x_n]/I$, and therefore $F=\bA(\bbf)$ for some basis $\bbf=(f_1,\dots,f_n)$ of $F^{\perp} \cap k[x_1, \ldots, x_n]_{d}$. Thus, we have proved:

\begin{prop} \label{P:image} \it $\im(\bA)=U_{\Res}$. 
\end{prop}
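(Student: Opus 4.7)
The containment $\im(\bA)\subseteq U_{\Res}$ is nearly immediate from Proposition \ref{P:inverse-system}. Namely, for $\bbf\in (k[x_1,\dots,x_n]_d^{\oplus n})_{\Res}$ and $F:=\bA(\bbf)$, the proposition gives $F^{\perp}=(f_1,\dots,f_n)$. Since the ideal $(f_1,\dots,f_n)$ is generated in degree $d$, its degree-$d$ component coincides with the $k$-span of $\bbf$. The condition $\Res(\bbf)\neq 0$ forces $f_1,\dots,f_n$ to be linearly independent (otherwise the common zero locus would contain a line through the origin, contradicting Lemma \ref{fourconds}). Thus $F^{\perp}\cap k[x_1,\dots,x_n]_d$ is $n$-dimensional with basis $\bbf$ of nonzero resultant, i.e.\ $F\in U_{\Res}$.

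For the reverse inclusion $U_{\Res}\subseteq \im(\bA)$, take $F\in U_{\Res}$ and pick any basis $\bbf=(f_1,\dots,f_n)$ of $F^{\perp}\cap k[x_1,\dots,x_n]_d$ with $\Res(\bbf)\neq 0$. Setting $I:=(f_1,\dots,f_n)$, one has $I\subset F^{\perp}$ by construction, while Lemma \ref{fourconds} ensures that $k[x_1,\dots,x_n]/I=M(\bbf)$ is a local Artinian Gorenstein algebra of socle degree $n(d-1)$, which equals $\deg F$. Remark \ref{invsysrem} then upgrades $I\subset F^{\perp}$ to $I=F^{\perp}$ and shows that any Macaulay inverse system of $M(\bbf)$ in degree $n(d-1)$ is proportional to $F$. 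By Proposition \ref{P:inverse-system}, $\bA(\bbf)$ is such an inverse system, so $F=c\cdot\bA(\bbf)$ for some $c\in k^{*}$.

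It remains to absorb the scalar $c$ so that $F$ actually lies in the image of $\bA$ (not merely its projectivization). This is where the equivariance of Lemma \ref{L:equiv2} enters: taking $g_1=\id$ and $g_2=\mu\cdot \id$ for $\mu\in k^{*}$ gives $(g_1,g_2)\bbf=\mu^{-1}\bbf$ and $\bA(\mu^{-1}\bbf)=\mu^{n}\bA(\bbf)$. Since $k$ is algebraically closed, the equation $\mu^{n}=c$ has a solution, and the rescaled tuple $\tilde\bbf:=\mu^{-1}\bbf$, which is still a basis of $F^{\perp}\cap k[x_1,\dots,x_n]_d$ with nonzero resultant, satisfies $\bA(\tilde\bbf)=F$. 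This places $F$ in $\im(\bA)$ and completes the proof.

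The argument is short and presents no serious obstacle; the only mildly delicate point is the final scalar adjustment, which is handled cleanly by the equivariance formula (\ref{E:equiv2}) and the algebraic closedness of $k$.
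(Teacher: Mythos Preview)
Your proof is correct and follows essentially the same route as the paper's: both use Proposition~\ref{P:inverse-system} for the inclusion $\im(\bA)\subseteq U_{\Res}$ and then, for the reverse inclusion, take the ideal $I$ generated by $F^{\perp}\cap k[x_1,\dots,x_n]_d$ and invoke Remark~\ref{invsysrem} to conclude $I=F^{\perp}$. The only difference is that the paper compresses the final step (``therefore $F=\bA(\bbf)$ for some basis $\bbf$'') into one clause, whereas you spell out the scalar absorption explicitly via the equivariance formula~(\ref{E:equiv2}) and the algebraic closedness of $k$.
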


The constructions of the morphism $\bA$ can be projectivized. Indeed, denote by $\Gr(n, k[x_1, \ldots, x_n]_{d})$ the Grassmannian of $n$-dimensional subspaces of $k[x_1, \ldots, x_n]_{d}$. The resultant $\Res$ on $k[x_1, \ldots, x_n]_{d}^{\oplus n}$ descends to a section, also denoted by $\Res$, of a power of the very ample generator of the Picard group of $\Gr(n, k[x_1, \ldots, x_n]_{d})$.  Let $\Gr(n, k[x_1, \ldots, x_n]_{d})_{\Res}$ be the affine open subvariety where $\Res$ does not vanish; it consists of all $n$-dimensional subspaces of $k[x_1, \ldots, x_n]_{d}$ having a basis with nonzero resultant. Consider the morphism 
$$
(k[x_1, \ldots, x_n]_{d}^{\oplus n})_{\Res} \to \Gr(n, k[x_1, \ldots, x_n]_{d})_{\Res}, \quad  \bbf = (f_1, \ldots, f_n) \mapsto \langle f_1, \ldots, f_n \rangle,
$$
where $\langle\,\cdot\,\rangle$ denotes linear span. Then, by equivariance property (\ref{E:equiv2}), the morphism $\bA$ composed with the projection $k[y_1, \ldots, y_n]_{n(d-1)}\setminus\{0\}\to\PP(k[y_1, \ldots, y_n]_{n(d-1)})$ factors as
$$
(k[x_1, \ldots, x_n]_{d}^{\oplus n})_{\Res} \to \Gr(n, k[x_1, \ldots, x_n]_{d})_{\Res} \xrightarrow{\hat\bA} \PP(k[y_1, \ldots, y_n]_{n(d-1)}).
$$
By Proposition \ref{P:inverse-system}, the morphism $\hat\bA$ can be thought of as a map assigning to every subspace $W\in\Gr(n, k[x_1, \ldots, x_n]_{d})_{\Res}$ {\it the}\, Macaulay inverse system for the algebra $M(\bbf)$, where $\bbf=(f_1,\dots,f_n)$ is any basis of $W$.

By Proposition \ref{P:image} we have $\im(\hat\bA)=\PP(U_{\Res})$, where $\PP(U_{\Res})$ is the image of $U_{\Res}$ in the projective space $\PP(k[y_1, \ldots, y_n]_{n(d-1)})$. With a little extra effort one obtains (see \cite[Proposition 2.13]{AI}):

\begin{prop} \label{P:imagehat} \it The morphism $\hat\bA:\Gr(n, k[x_1, \ldots, x_n]_{d})_{\Res}\to\PP(U_{\Res})$ is an isomorphism.
\end{prop}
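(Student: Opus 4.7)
The plan is to construct an explicit set-theoretic inverse $\Phi \co \PP(U_{\Res}) \to \Gr(n, k[x_1, \ldots, x_n]_{d})_{\Res}$ of $\hat\bA$ and then upgrade $\Phi$ to a morphism; surjectivity of $\hat\bA$ onto $\PP(U_{\Res})$ has already been supplied by Proposition \ref{P:image}, so the real content consists in injectivity together with regularity of the inverse.

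First I would define $\Phi$ on points by $[F] \mapsto F^{\perp} \cap k[x_1,\ldots,x_n]_d$. The two conditions built into the definition of $U_{\Res}$ guarantee that the image subspace is $n$-dimensional and admits a basis with nonzero resultant, so $\Phi$ lands in $\Gr(n, k[x_1, \ldots, x_n]_{d})_{\Res}$. To check $\Phi\circ\hat\bA=\id$, take $W\in\Gr(n, k[x_1, \ldots, x_n]_{d})_{\Res}$ with basis $\bbf=(f_1,\dots,f_n)$: by Proposition \ref{P:inverse-system} the form $\bA(\bbf)$ is a Macaulay inverse system for $M(\bbf)$, so $\bA(\bbf)^{\perp}=(f_1,\dots,f_n)$; intersecting with $k[x_1,\ldots,x_n]_d$ returns $W$, since each $f_i$ has degree exactly $d$ and hence the degree-$d$ piece of the ideal $(f_1,\dots,f_n)$ is just the linear span $W$ itself. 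For the reverse composition $\hat\bA\circ\Phi=\id$, start with $[F]\in\PP(U_{\Res})$, set $W:=F^{\perp}\cap k[x_1,\ldots,x_n]_d$ and pick any basis $\bbf$ of $W$; the ideal $(f_1,\dots,f_n)$ is then contained in $F^{\perp}$, and the uniqueness half of Remark \ref{invsysrem} forces $F$ to be proportional to $\bA(\bbf)$, giving $[F]=\hat\bA(W)$.

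Next I would promote $\Phi$ from a set-theoretic map to a morphism of varieties. For this, I would use the polar pairing (\ref{pp}) to attach to each $F$ the linear map
$$
\phi_F \co k[x_1,\ldots,x_n]_d \to k[y_1,\ldots,y_n]_{n(d-1)-d}, \qquad h\mapsto h\circ F,
$$
whose matrix in fixed bases depends linearly on the coefficients of $F$, and to observe that $\ker\phi_F=F^{\perp}\cap k[x_1,\ldots,x_n]_d$. On $U_{\Res}$ this kernel has constant dimension $n$, and the standard procedure assigning to a family of linear maps of locally constant rank its kernel sub-bundle yields a morphism $U_{\Res}\to\Gr(n,k[x_1,\ldots,x_n]_d)$; since $\ker\phi_{\lambda F}=\ker\phi_F$ for $\lambda\in k^{\times}$, this factors through $\PP(U_{\Res})$ and, by the second defining condition of $U_{\Res}$, lands inside the open subvariety $\Gr(n, k[x_1, \ldots, x_n]_{d})_{\Res}$.

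The main conceptual hurdle is not a single hard computation but the bookkeeping among the various avatars of the annihilating data---the form $F$, the ideal $F^{\perp}$, its degree-$d$ component, and a basis of that component regarded as a point of $k[x_1,\ldots,x_n]_d^{\oplus n}$---and the pivot identifying them coherently is the uniqueness statement in Remark \ref{invsysrem}. Once these identifications are in place, the two equalities $\Phi\circ\hat\bA=\id$ and $\hat\bA\circ\Phi=\id$ become formal, and the isomorphism statement drops out.
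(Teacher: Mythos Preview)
Your proof is correct. Note that the paper does not itself prove this proposition: it cites \cite[Proposition 2.13]{AI} after remarking that surjectivity (Proposition \ref{P:image}) plus ``a little extra effort'' yields the result. Your argument supplies precisely that extra effort, and the set-theoretic half of it---defining $\Phi([F])=F^{\perp}\cap k[x_1,\dots,x_n]_d$ and invoking Remark \ref{invsysrem} to check $\hat\bA\circ\Phi=\id$---is already sketched in the paragraph immediately preceding Proposition \ref{P:image}. What you add is exactly what remains: the verification that $\Phi$ is a morphism via the constant-rank kernel construction for the catalecticant map $h\mapsto h\circ F$. This is the expected approach and almost certainly coincides with the argument in \cite{AI}.
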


\noindent This result will be utilized in our considerations of the relevant catalecticant varieties in the next section.

\section{The catalecticant schemes and varieties}\label{schemes}
\setcounter{equation}{0}

Let
$$
K:=\dim_k k[x_1,\dots,x_n]_d={d+n-1 \choose n-1}.
$$
Consider the quasiaffine variety
$$
U:=U_{K-n}(n(d-1)-d, d;n)\subset k[y_1,\dots,y_n]_{n(d-1)}
$$
and the affine subvariety
$$
V:=V_{K-n}(n(d-1)-d, d;n)\subset k[y_1,\dots,y_n]_{n(d-1)}
$$
as defined in \cite[p.~5]{IK}. Specifically, set
$$
L:=\dim_{k} k[y_1,\dots,y_n]_{n(d-1)-d}={n(d-1)-d+n-1 \choose n-1}
$$
and let $\{{\mathtt m}_1,\dots,{\mathtt m}_K\}$, $\{{\mathbf m}_1,\dots,{\mathbf m}_L\}$ be the standard monomial bases in the spaces $k[x_1,\dots,x_n]_d$ and $k[y_1,\dots,y_n]_{n(d-1)-d}$, respectively, with the monomials numbered in accordance with some orders, which we will fix from now on. For a form $F\in k[y_1,\dots,y_n]_{n(d-1)}$ let $F_j:={\mathtt m}_j\circ F\in k[y_1,\dots,y_n]_{n(d-1)-d}$, $j=1,\dots,K$,\linebreak where $\circ$ is defined in (\ref{pp}). Expanding $F_1,\dots,F_K$ with respect to $\{{\mathbf m}_1,\dots,{\mathbf m}_L\}$, we obtain an $L\times K$-matrix $D(F)$ called the {\it catalecticant matrix}. Then the varieties $U$ and $V$ are described as
$$
\begin{array}{l}
U=\{F \in k[y_1,\dots,y_n]_{n(d-1)}\mid \rank D(F)=K-n\},\\
\vspace{-0.3cm}\\
V=\{F\in k[y_1,\dots,y_n]_{n(d-1)}\mid \rank D(F)\le K-n\}.
\end{array}
$$
Note that $U$ is a dense open subset of $V$ (see \cite[Lemma 3.5]{IK}).

Clearly, $V\subset k[y_1,\dots,y_n]_{n(d-1)}$ is the affine subvariety given by the condition of the vanishing of all $(K-n+1)$-minors of $D(F)$. Observe that for $n=2$ we have $K=d+1$, $L=d-1$, and therefore the matrix $D(F)$ has no $(K-1)$-minors, hence $V=k[y_1,y_2]_{2(d-1)}$. Similarly, for $n=3$, $d=2$, we have $K=6$, $L=3$, therefore $D(F)$ has no $(K-2)$-minors, hence $V=k[y_1,y_2,y_3]_3$. Notice that in all other cases $L\ge K$, and therefore $V$ is a proper affine subvariety of $k[y_1,\dots,y_n]_{n(d-1)}$ unless $n=2$ or $n=3$, $d=2$. 

We will also consider the corresponding affine scheme ${\mathbf V}$ defined as follows. Let $M:=\dim_k k[y_1,\dots,y_n]_{n(d-1)}$ and $\{{\rm m}_1,\dots,{\rm m}_M\}$ be the standard monomial basis in the space $k[y_1,\dots,y_n]_{n(d-1)}$ with the monomials numbered in accordance with some order. Let $k[z_1,\dots,z_M]$ be the ring of polynomials in the coefficients of the expansion of a form in $k[y_1,\dots,y_n]_{n(d-1)}$ with respect to $\{{\rm m}_1,\dots,{\rm m}_M\}$.

\begin{definition}\label{defv}
Denote by $J_1$ the ideal in $k[z_1,\dots,z_M]$ generated by all $(K-n+1)$-minors of the matrix $D(F)$. We set
$$
{\mathbf V}:=\hbox{Spec}\,k[z_1,\dots,z_M]/J_1.
$$
\end{definition}

\noindent Clearly, we have ${\mathbf V}_{\red}=V$. 

Next, let $T:=(t_0,t_1,\dots,t_{n(d-1)})=(1,n,\dots,n,1)$ be the Gorenstein sequence from Hilbert function (\ref{hf}), which is symmetric about $n(d-1)/2$. Consider the quasiaffine variety $\Gor(T)$ that consists of all forms $F\in k[y_1,\dots,y_n]_{n(d-1)}$ such that the Hilbert function of the standard graded local Artinian Gorenstein algebra $k[x_1,\dots,x_n]/F^{\perp}$ is $T$. Clearly, $\Gor(T)$ is an open subset of the affine subvariety $\Gor_{\le}(T)\subset k[y_1,\dots,y_n]_{n(d-1)}=k[z_1,\dots,z_M]$ consisting of all forms $F$ for which the Gorenstein sequence of $k[x_1,\dots,x_n]/F^{\perp}$ does not exceed $T$. Analogously to $V$, the variety $\Gor_{\le}(T)$ is defined by the vanishing of all $(t_i+1)$-minors of the corresponding matrices constructed analogously to $D(F)$, for $i=1,\dots,n(d-1)-1$. Let $J_2$ be the ideal in $k[z_1,\dots,z_M]$ generated by all such minors (see \cite[p.~8]{IK} for details) and consider the affine scheme $\bfGor_{\le}(T):=\hbox{Spec}\,k[z_1,\dots,z_M]/J_2$. Then $\bfGor_{\le}(T)_{\red}=\Gor_{\le}(T)$, and we introduce another main player of this article, the scheme $\bfGor(T)$, as follows.

\begin{definition}\label{defgor}
Set $\bfGor(T)$ to be the open subscheme of $\bfGor_{\le}(T)$ defined by the open subset $\Gor(T)$ of $\Gor_{\le}(T)$.
\end{definition}

\noindent Clearly, we have $\bfGor(T)_{\red}=\Gor(T)$. Following \cite{IK}, we call ${\mathbf V}$, $\bfGor(T)$ the {\it catalecticant schemes} and $V$, $\Gor(T)$ the {\it catalecticant varieties}.

\begin{remark}\label{moregeneralcatal}
Note that \cite{IK} introduces more general catalecticant varieties and schemes, but in this paper we only focus on $V$, $\Gor(T)$, ${\mathbf V}$, $\bfGor(T)$.\end{remark}

We have the obvious inclusions
\begin{equation}
U_{\Res}\subset\Gor(T)\subset U\subset V,\label{setincl}
\end{equation}
where $U_{\Res}\subset k[y_1,\dots,y_n]_{n(d-1)}$ was defined in Section \ref{morphism}. To better understand the relationship between $U_{\Res}$, $\Gor(T)$, $U$ and $V$, we will now introduce a certain closed subset of $U$. 

Cover $U$ by open subsets ${\rm U}_{\alpha}$, each of which is given by the condition of the nonvanishing of a particular $(K-n)$-minor of the catalecticant matrix $D(F)$. In what follows, on each ${\rm U}_{\alpha}$ we will define a regular function $R_{\alpha}$. Let, for instance, ${\rm U}_{\alpha_{{}_0}}$ be the subset of $U$ described by the nonvanishing of the principal $(K-n)$-minor of $D(F)$. For $F\in {\rm U}_{\alpha_{{}_0}}$ we will now find a canonical basis of the solution set ${\mathcal S}(F)$ of the homogeneous system
$D(F)\gamma=0$,
where $\gamma$ is a column-vector in $k^K$. Since $\rank D(F)=K-n$, one has $\dim_{k}{\mathcal S}(F)=n$. Split $D(F)$ into blocks as follows:
$$
D(F)=\left(
\begin{array}{c}
\boxed{A(F)} \quad \boxed{B(F)}\\
\vspace{-0.3cm}\\
\boxed{\hspace{0.7cm}C(F)\hspace{0.7cm}}
\end{array}
\right),
$$
where $A(F)$ has size $(K-n)\times(K-n)$ (recall that $\det A(F)\ne 0$), $B(F)$ has size $(K-n)\times n$, and $C(F)$ has size $(L-K+n)\times K$. We also split the column-vector $\gamma$ as $\gamma=\left(\begin{array}{c}\gamma'\\\gamma''\end{array}\right)$, where $\gamma$ is in $k^{K-n}$ and $\gamma''$ is in $k^n$. Then ${\mathcal S}(F)$ is given by the  condition
$
\gamma'=-A(F)^{-1}B(F)\gamma''.
$
Therefore, the vectors
$$
\gamma_j(F):=\left(\begin{array}{c}-A(F)^{-1}B(F){\mathbf e}_j\\{\mathbf e}_j\end{array}\right),\quad j=1,\dots,n,
$$
form a basis of ${\mathcal S}(F)$ for every $F\in {\rm U}_{\alpha_{{}_0}}$, where ${\mathbf e}_j$ is the $j$th standard basis vector in $k^n$. 

Clearly, the components $\gamma_j^1,\dots,\gamma_j^K$ of $\gamma_j$ are regular functions on ${\rm U}_{\alpha_{{}_0}}$ for each $j$, and we define
$r_{j,\alpha_{{}_0}}:=\sum_{i=1}^K \gamma_j^i\,{\mathtt m}_i$, $j=1,\dots,n$,
where, as before, $\{{\mathtt m}_1,\dots,{\mathtt m}_K\}$ is the standard monomial basis in $k[x_1,\dots,x_n]_d$. Then the $d$-forms $r_{1,{\alpha_{{}_0}}}(F),\dots,r_{n,{\alpha_{{}_0}}}(F)$ constitute a basis of the intersection $F^{\perp}\cap k[x_1,\dots,x_n]_d$ for every $F\in {\rm U}_{\alpha_{{}_0}}$.  Set
$
R_{\alpha_{{}_0}}:=\Res(r_{1,\alpha_{{}_0}},\dots,r_{n,\alpha_{{}_0}}).
$
Clearly, $R_{\alpha_{{}_0}}$ is a regular function on ${\rm U}_{\alpha_{{}_0}}$, and we define $Z_{\alpha_{{}_0}}$ to be its zero locus.

Arguing as above for every ${\rm U}_{\alpha}$, we introduce a regular function $R_{\alpha}$ on ${\rm U}_{\alpha}$ and its zero locus $Z_{\alpha}$. Notice that if for some $\alpha$, $\alpha'$ the intersection ${\rm U}_{\alpha,\alpha'}:={\rm U}_{\alpha}\cap {\rm U}_{\alpha'}$ is nonempty, then $Z_{\alpha}\cap {\rm U}_{\alpha,\alpha'} =Z_{\alpha'}\cap {\rm U}_{\alpha,\alpha'}$. Thus, the loci $Z_{\alpha}$ glue together into a closed subset $Z$ of $U$. If $U'$ is an irreducible component of $U$, then the intersection $Z\cap U'$ is either a hypersurface in $U'$, or all of $U'$, or empty. Notice also that $Z$ is $\GL_n$-invariant, which follows from the general formula
$$
(CF)^{\perp}\cap k[x_1,\dots,x_n]_j=C^{-t}\,(F^{\perp}\cap k[x_1,\dots,x_n]_j),\quad j=0,\dots,n(d-1),
$$
for all $C\in\GL_n$, $F\in k[y_1,\dots,y_n]_{n(d-1)}$.

We will now establish:

\begin{proposition}\label{onto1} One has $U_{\Res}=\Gor(T)\setminus Z=U\setminus Z=V\setminus\overline{Z}$.
\end{proposition}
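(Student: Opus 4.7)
The plan is to chain together three pieces of argument, ending with a topological step whose nontrivial content is $V\setminus U\subset\overline Z$.

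For the equality $U_{\Res}=U\setminus Z$: given $F\in U$, the rank condition yields $\dim F^\perp\cap k[x_1,\dots,x_n]_d=n$, and the canonical basis $r_{1,\alpha}(F),\dots,r_{n,\alpha}(F)$ spans this subspace on $U_\alpha$. Any two bases of the same $n$-dimensional subspace differ by an element of $\GL_n$, and under such a change the resultant scales by a nonzero power of the determinant; hence "some basis has nonvanishing resultant" is equivalent to $R_\alpha(F)\ne 0$, i.e.\ to $F\notin Z$. For $U_{\Res}=\Gor(T)\setminus Z$, the chain (\ref{setincl}) and the previous identity reduce the task to $U_{\Res}\subset\Gor(T)$: given $F\in U_{\Res}$, Proposition \ref{P:image} yields $\bbf\in(k[x_1,\dots,x_n]_d^{\oplus n})_{\Res}$ with $F=\bA(\bbf)$; Proposition \ref{P:inverse-system} identifies $k[x_1,\dots,x_n]/F^\perp$ with $M(\bbf)$, whose Hilbert function is $T$ by Remark \ref{hilbert}, placing $F$ in $\Gor(T)$.

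The last equality $U\setminus Z=V\setminus\overline Z$ has one easy inclusion: since $Z$ is closed in the open subset $U\subset V$, one has $\overline Z\cap U=Z$, giving $U\setminus Z\subset V\setminus\overline Z$. The opposite inclusion reduces to the main claim
\[
V\setminus U\subset\overline Z.
\]
The key preliminary is the following rigidity observation: for any $F_0\in V$ with $\dim F_0^\perp\cap k[x_1,\dots,x_n]_d\geq n+1$, every linearly independent $n$-tuple $\bbf\subset F_0^\perp\cap k[x_1,\dots,x_n]_d$ satisfies $\Res(\bbf)=0$. Indeed, otherwise Lemma \ref{fourconds} makes $M(\bbf)$ a local Artinian Gorenstein algebra of socle degree $n(d-1)$, and Remark \ref{invsysrem} applied to $I=(\bbf)\subset F_0^\perp$ forces $F_0^\perp=(\bbf)$, contradicting the dimension hypothesis.

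I would then establish the main claim by approximation. Fix $F_0\in V\setminus U$ and choose an $n$-dimensional subspace $W$ of $F_0^\perp\cap k[x_1,\dots,x_n]_d$; by the preliminary $W$ belongs to the discriminant locus $\Delta\subset\Gr(n,k[x_1,\dots,x_n]_d)$ cut out by the vanishing of the resultant. Consider the linear subspace
\[
L_W:=\{F\in k[y_1,\dots,y_n]_{n(d-1)}\mid W\subset F^\perp\},
\]
which contains $F_0$. The subset $\{F\in L_W\mid\dim F^\perp\cap k[x_1,\dots,x_n]_d\geq n+1\}$ is closed in $L_W$ by upper-semicontinuity of the kernel dimension of the catalecticant; granting that it is a proper subset, a dense open set of $F\in L_W$ (and in particular $F$ arbitrarily close to $F_0$) satisfies $F^\perp\cap k[x_1,\dots,x_n]_d=W$ exactly. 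Such $F$ lies in $U$, and its unique basis spans $W\in\Delta$, so $F\in Z$; letting $F\to F_0$ along $L_W$ yields $F_0\in\overline Z$. The main obstacle is verifying that the closed locus above is proper in $L_W$, equivalently the double-annihilator identity $\{g\in k[x_1,\dots,x_n]_d\mid g\circ F=0\ \forall\,F\in L_W\}=W$. By graded Matlis duality this reduces to showing that the multiplication pairing $(k[x_1,\dots,x_n]/(W))_d\times(k[x_1,\dots,x_n]/(W))_{n(d-1)-d}\to(k[x_1,\dots,x_n]/(W))_{n(d-1)}$ has no left kernel, a Lefschetz-type statement that I expect to verify via a dimension count exploiting the Koszul resolution of $M(\bbf)$ for any basis $\bbf$ of $W$.
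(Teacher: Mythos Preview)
Your argument for $U_{\Res}=U\setminus Z$ and $U_{\Res}=\Gor(T)\setminus Z$ is correct and matches the paper's reasoning (the paper simply calls the first equality ``clear'' and invokes the chain of inclusions~(\ref{setincl}) for the second). The decomposition of the last equality into the trivial inclusion $U\setminus Z\subset V\setminus\overline Z$ and the key claim $V\setminus U\subset\overline Z$ is also the right move; the paper does exactly this.

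The gap is in your treatment of $V\setminus U\subset\overline Z$. The paper does not attempt a self-contained argument here: it simply quotes the proof of \cite[Lemma~3.5]{IK}, which already produces, in every neighborhood of a point of $V\setminus U$, a form $\hat F\in U$ whose annihilator in degree~$d$ has a common zero. Your approach---fixing an $n$-dimensional $W\subset F_0^\perp\cap k[x_1,\dots,x_n]_d$ and approximating within the linear space $L_W$---is a reasonable strategy, and your ``preliminary'' observation (that any basis of such a $W$ has vanishing resultant) is correct. But the reduction you perform at the end is self-defeating. You reduce to showing that the multiplication pairing on $k[x_1,\dots,x_n]/(W)$ has no left kernel in the relevant degrees, and propose to verify this ``via a dimension count exploiting the Koszul resolution of $M(\bbf)$ for any basis $\bbf$ of $W$.'' However, the Koszul complex on $\bbf$ is a resolution of $M(\bbf)$ \emph{precisely when} $\bbf$ is a regular sequence, i.e., when $\Res(\bbf)\ne 0$ (this is the content of Lemma~\ref{fourconds}). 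Your own preliminary observation forces $\Res(\bbf)=0$ for every basis of $W$, so the Koszul complex is not exact and gives no dimension count for $k[x_1,\dots,x_n]/(W)$. Thus the proposed verification cannot go through as written.

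Whether the ``no left kernel'' statement is nonetheless true for an arbitrary (or for some well-chosen) $n$-dimensional $W$ with vanishing resultant is a separate question; it may well hold, but it is not obvious, and you would need a genuinely different tool---one adapted to ideals generated by a non-regular sequence---to establish it. Absent that, the clean route is the one the paper takes: invoke \cite[Lemma~3.5]{IK} directly.
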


\begin{proof} It is clear that $U_{\Res}=U\setminus Z$, thus inclusions (\ref{setincl}) imply $U_{\Res}=\Gor(T)\setminus Z=U\setminus Z$. Further, to see that $U\setminus Z=V\setminus\overline{Z}$, we need to prove that $V\setminus U\subset\overline{Z}$. As shown in the proof of \cite[Lemma 3.5]{IK}, in every neighborhood of every form $F\in V\setminus U$ there exists $\hat F\in U$ such that all elements of $\hat F^{\perp} \cap k[x_1,\dots,x_n]_d $ have a common zero away from the origin. Thus, $F\in\overline{Z}$ as required. \end{proof}

Next, recall that by Proposition \ref{P:imagehat} the morphism $\hat\bA: \Gr(n, k[x_1, \ldots, x_n]_{d})_{\Res}\to \PP(U_{\Res})$ is an isomorphism. Therefore, we have
$$
\dim_k \PP(U_{\Res})=\dim_k \Gr(n, k[x_1, \ldots, x_n]_{d})=Kn-n^2,
$$
which implies
\begin{equation}
\dim_k U_{\Res}=Kn-n^2+1=:N.\label{dim1}
\end{equation}
As $U_{\Res}$ is irreducible, we obtain the following result:

\begin{theorem}\label{main8} There exist irreducible components $\Gor(T)^{\circ}$, $U^{\circ}$, $V^{\circ}$ of the varieties $\Gor(T)$, $U$, $V$, respectively, such that
$
U_{\Res}=\Gor(T)^{\circ}\setminus Z=U^{\circ}\setminus Z=V^{\circ}\setminus\overline{Z},
$
with
$
\dim_k\Gor(T)^{\circ}=\dim_k U^{\circ}=\dim_k V^{\circ}=N,
$
where $N$ is defined in {\rm (\ref{dim1})}.
\end{theorem}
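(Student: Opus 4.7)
The theorem is essentially a formal consequence of what has already been set up, so the plan is short and mostly book-keeping around Propositions \ref{onto1} and \ref{P:imagehat} together with the dimension formula (\ref{dim1}).

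The plan is to use the general principle that a non-empty irreducible open subset $W$ of a (reduced) variety $X$ is automatically contained in a unique irreducible component $X^{\circ}$ of $X$, and moreover $\overline{W}^{X}=X^{\circ}$. Indeed, writing $X=\bigcup_i X_i$ as the union of its components and intersecting with $W$ gives a finite cover of $W$ by its own closed subsets; irreducibility of $W$ then forces $W\subset X_j$ for a unique $j$. Since $W$ is non-empty and open in $X$, it is open and non-empty in $X_j$, hence dense in $X_j$ by irreducibility, so $\overline{W}^{X}=\overline{W}^{X_j}=X_j$.

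First I would record that $U_{\Res}$ is irreducible and of dimension $N$. Irreducibility follows from Proposition \ref{P:imagehat}, which identifies $\PP(U_{\Res})$ with the irreducible variety $\Gr(n,k[x_1,\dots,x_n]_{d})_{\Res}$; non-emptiness is witnessed by $\bbf=(x_1^d,\dots,x_n^d)$. The dimension statement is (\ref{dim1}). Next I would check that $U_{\Res}$ is open in each of $\Gor(T)$, $U$, $V$: this is immediate from Proposition \ref{onto1}, because $Z$ is closed in $U$ (and a fortiori in $\Gor(T)\subset U$), while $\overline{Z}$ is closed in $V$, so the three equalities $U_{\Res}=\Gor(T)\setminus Z=U\setminus Z=V\setminus\overline{Z}$ exhibit $U_{\Res}$ as the complement of a closed subset in each ambient variety.

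Applying the general principle to $W=U_{\Res}$ inside $X=\Gor(T)$, $U$, $V$ respectively yields uniquely determined irreducible components $\Gor(T)^{\circ}$, $U^{\circ}$, $V^{\circ}$, each realised as the closure of $U_{\Res}$ in the respective ambient variety. Since dimension is preserved under taking closure, all three components have dimension $\dim_k U_{\Res}=N$. The four-fold equality claimed in the theorem is then a one-line check: e.g.\ $V^{\circ}\setminus\overline{Z}\subset V\setminus\overline{Z}=U_{\Res}$, while $U_{\Res}\subset V^{\circ}$ and $U_{\Res}\cap\overline{Z}=\emptyset$ give the reverse inclusion $U_{\Res}\subset V^{\circ}\setminus\overline{Z}$; the same argument with $\overline{Z}$ replaced by $Z$ handles the $U$ and $\Gor(T)$ cases.

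There is no serious obstacle; the only point that would require a moment's care is the routine verification that the general principle above really applies here, i.e.\ that the schemes being considered may be treated as (reduced) varieties for the purposes of extracting irreducible components. That is guaranteed because $\Gor(T)$, $U$, $V$ are varieties by construction (not the schemes $\mathbf V$ and $\bfGor(T)$, which enter only in the next section), so the argument goes through verbatim.
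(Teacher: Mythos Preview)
Your proposal is correct and follows essentially the same approach as the paper: the paper simply notes that $U_{\Res}$ is irreducible (via Proposition \ref{P:imagehat}) and of dimension $N$ (formula (\ref{dim1})), and then invokes Proposition \ref{onto1} to conclude, leaving the routine component-extraction implicit. You have spelled out the underlying general principle and the verification of the equalities in more detail than the paper does, but the argument is the same.
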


\begin{remark}\label{compar1} A fact similar to Theorem \ref{main8} was obtained in \cite{IK}. Specifically, Theorem 4.17 of \cite{IK} shows that $\Gor(T)$ has an irreducible component containing $U_{\Res}$ as a dense subset and the dimension of this component is equal to $N$. The proof given in \cite{IK} does not explicitly utilize the morphism $\bA$ and is somewhat brief overall. Also, Theorem 4.19 of \cite{IK} (cf.~Corollary 4.18 therein) yields that $U_{\Res}$ is dense in an irreducible component of $V$ in the following cases: (i) $n=3$, $d\ge 3$,\linebreak (ii) $n=4$, $d=2,3$, (iii) $n=5$, $d=2$. In comparison with these results, Theorem \ref{main8} stated above is more precise because:
\begin{itemize}

\item it treats both $\Gor(T)$ and $V$ simultaneously for all $n,d$;

\item it shows that $U_{\Res}$ is in fact open (not just dense) in an irreducible component of each of $\Gor(T)$ and $V$ and explicitly describes the closed complement to $U_{\Res}$ in terms of the subset $Z$;

\item its proof gives a complete argument for the formula for $\dim_k U_{\Res}$. 

\end{itemize}
\vspace{0.3cm}

In the next section we will see that the set $U_{\Res}$ lies in the smooth part of $V^{\circ}$, i.e., that all singularities of $V^{\circ}$ are contained in $\bar{Z}$. In fact, we will arrive at a stronger conclusion by investigating the smoothness of the schemes ${\mathbf V}$ and $\bfGor(T)$ at the points of $U_{\Res}$.

\end{remark}

\section{Smoothness of ${\mathbf V}$ and $\bfGor(T)$ along $U_{\Res}$}\label{results}
\setcounter{equation}{0}

In this section we prove:

\begin{theorem}\label{main888} Each of the catalecticant schemes ${\mathbf V}$ and $\bfGor(T)$ is smooth at every point lying in $U_{\Res}$.
\end{theorem}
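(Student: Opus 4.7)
The plan is to compute the dimension of the Zariski tangent space to each scheme at a point $F \in U_{\Res}$ and verify that it equals $N = Kn - n^2 + 1$. By Theorem \ref{main8}, $F$ lies on an irreducible component of the underlying reduced variety of dimension $N$, so the tangent space automatically has dimension at least $N$; smoothness of the scheme at $F$ therefore reduces to proving the matching upper bound, turning the entire problem into a dimension count.

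First, I would invoke the explicit formulas for $\dim_k T_F {\mathbf V}$ and $\dim_k T_F \bfGor(T)$ obtained in Theorems 3.2 and 3.9 of \cite{IK}. These express each tangent-space dimension in terms of the ranks of certain linear maps built from the catalecticant data of $F$, which in turn can be read off from the graded pieces of the apolar ideal $F^{\perp}$. For $F \in U_{\Res}$, Proposition \ref{P:inverse-system} guarantees that $F = \bA(\bbf)$ for some $\bbf$ with nonzero resultant, so $k[x_1,\dots,x_n]/F^{\perp}$ is precisely the standard graded Artinian Gorenstein complete intersection $M(\bbf)$ whose Hilbert function is given by (\ref{hf}).

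The key technical ingredient is then the Koszul resolution
$$
0 \to k[x_1,\dots,x_n](-nd) \to \cdots \to k[x_1,\dots,x_n](-d)^{\oplus n} \to k[x_1,\dots,x_n] \to M(\bbf) \to 0,
$$
whose exactness follows from the complete intersection property. Extracting its graded pieces in the relevant degrees, and combining the result with the duality between $k[x_1,\dots,x_n]_j$ and $k[y_1,\dots,y_n]_j$ afforded by the polar pairing, yields closed-form expressions in terms of binomial coefficients for $\dim_k (F^{\perp})_j$ and for the ranks of all the multiplication maps appearing in the tangent-space formulas. The $\GL_n$-equivariance recorded in Lemma \ref{L:equiv2}, together with Proposition \ref{P:imagehat}, confirms that these numbers depend only on $(n,d)$ and not on the chosen $F \in U_{\Res}$, so it suffices to perform the computation symbolically.

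The hard part, and the combinatorial heart of the argument, will be to check that the resulting sums of binomial coefficients actually collapse to $N = Kn - n^2 + 1$ for every $n \ge 2$ and $d \ge 2$. I expect the identity needed for $\bfGor(T)$ to be comparatively accessible, essentially an Euler-characteristic computation on the Koszul complex in the socle degree $n(d-1)$. The identity governing ${\mathbf V}$ is considerably more delicate because the catalecticant scheme structure involves all $(K-n+1)$-minors of $D(F)$ rather than the smaller minors cut out by the Gorenstein condition; this is the step that will require the nontrivial combinatorial identity credited to M.~Riedel, and it is precisely where the alternative argument suggested by the referees enters.
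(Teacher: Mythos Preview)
Your overall strategy---compute $\dim_k T_F(\mathbf{V})$ and $\dim_k T_F(\bfGor(T))$ via \cite[Theorems 3.2, 3.9]{IK}, use the Koszul resolution of $M(\bbf)$, and reduce to binomial identities---matches the paper. But your proposal contains a structural misconception that would send you down the wrong path.

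You expect two separate computations, an ``accessible'' one for $\bfGor(T)$ and a ``more delicate'' one for $\mathbf{V}$. In fact the very first step in the paper is to observe that for $F\in U_{\Res}$ one has $I_{n(d-1)-d}=k[x_1,\dots,x_n]_{n(d-1)-2d}\,I_d$, which by \cite[Theorems 3.2, 3.9]{IK} forces
\[
\dim_k T_F(\mathbf{V})=\dim_k T_F(\bfGor(T))
={n(d-1)+n-1\choose n-1}-\dim_k\bigl(k[x_1,\dots,x_n]_{n(d-1)-2d}\,I_d^2\bigr).
\]
So there is a \emph{single} identity to establish, not two of differing difficulty, and the Riedel identity (together with two companion identities) is needed for both schemes simultaneously.

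A second point: your sketch understates what the Koszul input actually buys. Extracting graded pieces of the Koszul complex gives you $\dim_k I_j$ for each $j$, but the quantity you need is $\dim_k(k[x_1,\dots,x_n]_{n(d-1)-2d}\,I_d^2)$, which is \emph{not} a single graded piece of anything in the resolution. The paper computes it by analysing the space $\mathcal{R}$ of relations among the products $e_\ell f_i f_j$, and the Koszul exactness is invoked repeatedly (Lemma~\ref{lineardepens1}) inside a recursive cascade of auxiliary spaces $\mathcal{W}_s$, $\mathcal{K}_s$, $\mathcal{W}_{s;r}$, $\mathcal{K}_{s;r}$, \ldots\ to unwind $\dim_k\mathcal{R}$ into the alternating sum in Proposition~\ref{finaldim}. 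This iterated relation-chasing is the substantive middle of the argument, and your outline does not anticipate it.

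Finally, the $\GL_n$-equivariance and Proposition~\ref{P:imagehat} are not needed to see that the tangent-space dimension is independent of $F$: this already follows because all the relevant dimensions are determined by the Hilbert function of $M(\bbf)$, which is fixed by~(\ref{hf}).
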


\begin{remark}\label{smoothness}
Let $X$ be one of the schemes ${\mathbf V}$, $\bfGor(T)$. The smoothness of $X$ at a point $x\in X$ is understood as the fact that the ring ${\mathcal O}_{X,x}$ is a regular local ring. This is equivalent to the identity $\dim T_x(X)=\dim {\mathcal O}_{X,x}$, where the Zariski tangent space $T_x(X)$ to $X$ at $x$ is regarded as a vector space over the residue field at $x$ (see, e.g., \cite[p.~158]{GW}).
\end{remark}

\begin{remark}\label{compar2} The smoothness of $\bfGor(T)$ along $U_{\Res}$ follows from \cite[Theorem 1.4]{K} as the algebra $M(\bbf)$ is a complete intersection for every $\bbf\in(k[x_1, \ldots, x_n]_d^{\oplus n})_{\Res}$. At the same time, the smoothness of ${\mathbf V}$ along $U_{\Res}$ appears to be only known in cases (i)--(iii) specified in Remark \ref{compar1} (see \cite[Theorem 4.19]{IK}). The proof of Theorem \ref{main888} given below works simultaneously for both ${\mathbf V}$ and $\bfGor(T)$ with arbitrary $n,d$. It does not depend on results of \cite{K} and is much more elementary. Indeed, our argument does not rely either on Laudal's description of hulls or on results concerning the tangent and obstruction space for the scheme $GradAlg(H)$ (cf.~\cite[p.~610]{K}). Apart from the formulas for the dimensions of the tangent spaces to ${\mathbf V}$ and $\bfGor(T)$ found in \cite[Theorems 3.2, 3.9]{IK}, it only utilizes the Koszul resolution of the algebra $M(\bbf)$. Our approach is combinatorial, and, as part of the proof, we obtain a number of combinatorial identities that are of independent interest.
\end{remark}

\begin{proof}[Proof of Theorem {\rm \ref{main888}}] The theorem is obvious in the cases $n=2$ and $n=3$, $d=2$, thus everywhere below we assume that $n\ge 3$ and that $d\ge 3$ if $n=3$. We will compute the dimensions of the tangent spaces $T_F({\mathbf V})$ to ${\mathbf V}$ and $T_F(\bfGor(T))$ to $\bfGor(T)$ at every closed point $F\in U_{\Res}$. In view of Theorem \ref{main8}, we only need to show that
$\dim_{k} T_F({\mathbf V})=\dim_{k} T_F(\bfGor(T))=N$ for all closed points $F\in U_{\Res}$, where $N$ is the number introduced in (\ref{dim1}).

Fix a closed point $F\in U_{\Res}$, let $I:=F^{\perp}$ and set $I_j:=I\cap k[x_1, \ldots, x_n]_j$ for all $j\ge0$. By \cite[Theorem 3.2]{IK} we then see
\begin{equation}
\begin{array}{l}
\displaystyle\dim_{k} T_F({\mathbf V})=\dim_{k}k[y_1, \ldots, y_n]_{n(d-1)}-\dim_{k}  I_{d}\, I_{n(d-1)-d}=\\
\vspace{-0.1cm}\\
\displaystyle\hspace{4cm}{n(d-1)+n-1 \choose n-1}-\dim_{k}  I_{d}\, I_{n(d-1)-d}.
\end{array}\label{formfortang}
\end{equation}
In formula (\ref{formfortang}) and everywhere below, for two linear subspaces ${\mathcal U}\subset k[x_1, \ldots, x_n]_{i}$, ${\mathcal V}\subset k[x_1, \ldots, x_n]_{j}$, the product ${\mathcal U}{\mathcal V}$ denotes the linear subspace of $k[x_1, \ldots, x_n]_{i+j}$ spanned by all products $g h$, with $g\in {\mathcal U}$, $h\in {\mathcal V}$.

As $n(d-1)-2d\ge 0$, we have $I_{n(d-1)-d}=k[x_1, \ldots, x_n]_{n(d-1)-2d}\, I_{d}$, and therefore 
\begin{equation}
\dim_{k} T_F({\mathbf V})={n(d-1)+n-1 \choose n-1}-\dim_{k} (k[x_1, \ldots, x_n]_{n(d-1)-2d}\,I_{d}^2).\label{dimt1}
\end{equation}
Together with \cite[Theorem 3.9]{IK}, formula (\ref{dimt1}) yields the equality $\dim_{k}T_F({\mathbf V})=\dim_{k}T_F(\bfGor(T))$ (cf.~\cite[p.~115]{IK}). Hence, to prove the theorem for both ${\mathbf V}$ and $\bfGor(T)$, we only need to establish the identity
\begin{equation}
{n(d-1)+n-1 \choose n-1}-\dim_{k} (k[x_1, \ldots, x_n]_{n(d-1)-2d}\,I_{d}^2)=N.\label{desiredestim}
\end{equation}

In order to obtain (\ref{desiredestim}), one has to compute $\dim_{k} (k[x_1, \ldots, x_n]_{n(d-1)-2d}\,I_{d}^2)$. We fix a basis $\bbf=(f_1,\dots,f_n)$ in $I_d$ and start with the following simple observation:

\begin{lemma}\label{independenceofpartials}\it For any integer $\ell\ge 1$ the products of forms $f_{i_1}\cdots f_{i_{\ell}}$,\linebreak with $1\le i_1\le\dots\le i_{\ell}\le n$, are linearly independent in $k[x_1, \ldots, x_n]_{\ell d}$. In particular, one has
$\dim_{k}I_{d}^2=n(n+1)/2.$
\end{lemma}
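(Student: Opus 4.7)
The plan is to reduce the whole statement to the algebraic independence of $f_1,\dots,f_n$ over $k$. Since $F\in U_{\Res}$, the definition of $U_{\Res}$ furnishes a basis $\bbf=(f_1,\dots,f_n)$ of $I_d=F^{\perp}\cap k[x_1,\dots,x_n]_d$ with $\Res(\bbf)\ne 0$. By condition~(3) of Lemma~\ref{fourconds}, the morphism $\bbf\co\AA^n(k)\to\AA^n(k)$ is then finite, which is equivalent to saying that $k[x_1,\dots,x_n]$ is a finitely generated module over the subring $k[f_1,\dots,f_n]$.

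Since a finite ring extension preserves Krull dimension, $\Krulldim k[f_1,\dots,f_n]=n$. But $k[f_1,\dots,f_n]$ is a quotient of the polynomial ring $k[y_1,\dots,y_n]$ via the surjection $y_i\mapsto f_i$, and any proper quotient of $k[y_1,\dots,y_n]$ has Krull dimension strictly less than $n$. Hence this surjection has trivial kernel, i.e.\ $f_1,\dots,f_n$ are algebraically independent over $k$. Giving each $y_i$ weight $d$ makes the surjection graded, so on each homogeneous piece the monomials $y_{i_1}\cdots y_{i_\ell}$ with $1\le i_1\le\dots\le i_\ell\le n$ map to linearly independent elements $f_{i_1}\cdots f_{i_\ell}\in k[x_1,\dots,x_n]_{\ell d}$, which is exactly the first assertion of the lemma.

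For the dimension formula, $I_d^2$ is by definition spanned by products $gh$ with $g,h\in I_d$; expressing $g$ and $h$ in the basis $f_1,\dots,f_n$ and using commutativity, $I_d^2$ is spanned by the $n(n+1)/2$ products $f_if_j$ with $1\le i\le j\le n$, which by the $\ell=2$ case of the first assertion are linearly independent. The crucial input---algebraic independence of $f_1,\dots,f_n$---is essentially immediate from Lemma~\ref{fourconds}, so there is no real obstacle here; the lemma is simply laying the dimension-counting groundwork to be exploited by the combinatorial bulk of the proof of Theorem~\ref{main888}.
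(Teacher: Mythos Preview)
Your proof is correct and follows essentially the same approach as the paper: both deduce the algebraic independence of $f_1,\dots,f_n$ from the finiteness of the morphism $\bbf\co\AA^n(k)\to\AA^n(k)$ guaranteed by Lemma~\ref{fourconds}. The paper phrases this geometrically (a nontrivial relation would force the image into a hypersurface, contradicting finiteness), whereas you phrase it via Krull dimension, but the substance is the same.
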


\begin{proof} Suppose that there is a zero linear combination 
$$
\sum_{1\le i_1\le\dots\le i_{\ell}\le n}\alpha_{i_1,\dots,i_{\ell}}f_{i_1}\cdots f_{i_{\ell}}=0.
$$ 
This means that the morphism $\bbf=(f_1,\dots,f_n)\co {\mathbb A}^n(k)\to {\mathbb A}^n(k)$ sends all of ${\mathbb A}^n(k)$ into the hypersurface given by the equation
$
\sum_{1\le i_1\le\dots\le i_{\ell}\le n}\alpha_{i_1,\dots,i_{\ell}}x_{i_1}\cdots x_{i_{\ell}}=0,
$
which by Lemma \ref{fourconds} contradicts the condition $\Res(\bbf)\ne 0$. \end{proof}

By Lemma \ref{independenceofpartials}, we have
\begin{equation}
\makebox[250pt]{$\begin{array}{l}
\displaystyle\dim_{k} (k[x_1, \ldots, x_n]_{n(d-1)-2d}\,I_{d}^2)=\\
\vspace{-0.1cm}\\
\displaystyle\hspace{4.3cm}
\frac{n(n+1)}{2}\dim_k k[x_1, \ldots, x_n]_{n(d-1)-2d}-\dim_{k}{\mathcal R}=\\
\vspace{-0.1cm}\\
\displaystyle\hspace{4.3cm}
\frac{n(n+1)}{2}{n(d-1)-2d+n-1 \choose n-1}-\dim_{k}{\mathcal R},
\end{array}$}\label{dimviaR}
\end{equation}
where ${\mathcal R}$ is the vector space of linear relations among $e_{\ell}f_if_j$ in $k[x_1, \ldots, x_n]_{n(d-1)}$, with $i\le j$ and $\{e_{\ell}\}$ being any basis in $k[x_1, \ldots, x_n]_{n(d-1)-2d}$. Every relation in ${\mathcal R}$ has the form
\begin{equation}
\sum_{1\le i\le j\le n}\alpha_{ij}f_if_j=0,\label{generlinrel}
\end{equation}
with $\alpha_{ij}\in k[x_1, \ldots, x_n]_{n(d-1)-2d}$. In the following proposition we determine $\dim_{k}{\mathcal R}$.

\begin{proposition}\label{finaldim} One has
\begin{equation}
\displaystyle\dim_{k} {\mathcal R}=\sum_{m=3}^{\left[\frac{n(d-1)}{d}\right]}(-1)^{m-1}(m-1){n+1 \choose m}{n(d-1)-md+n-1 \choose n-1},\label{formdimmesR}
\end{equation}
where $[x]$ denotes the largest integer that is less than or equal to $x$.
\end{proposition}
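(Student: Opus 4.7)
The plan is to convert the problem into a Hilbert series computation for $R/I^2$, where we set $R=k[x_1,\dots,x_n]$ and $I=(f_1,\dots,f_n)$, and then to evaluate it using the Koszul resolution of $M(\mathbf{f})$.

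First, since $R\cdot I_d^2=I^2$, we have $R_{n(d-1)-2d}\cdot I_d^2=(I^2)_{n(d-1)}$. Combined with Lemma \ref{independenceofpartials}, formula (\ref{dimviaR}) then rewrites as
$$
\dim_k{\mathcal R}=\binom{n+1}{2}\binom{n(d-1)-2d+n-1}{n-1}-\binom{n(d-1)+n-1}{n-1}+\dim_k (R/I^2)_{n(d-1)},
$$
so the task reduces to computing $\dim_k(R/I^2)_{n(d-1)}$. By Lemma \ref{fourconds}, $\mathbf{f}$ is a regular sequence; hence the Koszul complex is a free resolution of $M(\mathbf{f})=R/I$, yielding the Hilbert series $H_{R/I}(u)=(1-u^d)^n/(1-u)^n$. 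A standard consequence of the Koszul theory (namely that for a regular sequence $\mathrm{Tor}_1^R(R/I,R/I)\cong I/I^2$ is computed by the Koszul complex) gives that the conormal module $I/I^2$ is a free $R/I$-module of rank $n$ generated in degree $d$, i.e., $I/I^2\cong(R/I)^n(-d)$. The short exact sequence $0\to I/I^2\to R/I^2\to R/I\to 0$ then produces
$$
H_{R/I^2}(u)=(1+nu^d)H_{R/I}(u)=\frac{(1-u^d)^n(1+nu^d)}{(1-u)^n}.
$$

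Next I plan to extract the coefficient of $u^{n(d-1)}$ and simplify. The key algebraic input is the identity
$$
\binom{n}{k}-n\binom{n}{k-1}=-(k-1)\binom{n+1}{k},
$$
which is routine to verify by a direct manipulation of factorials and which reduces the numerator of $H_{R/I^2}(u)$ to
$$
(1-u^d)^n(1+nu^d)=1+\sum_{k\ge 2}(-1)^{k-1}(k-1)\binom{n+1}{k}u^{kd}.
$$
Multiplying by $(1-u)^{-n}=\sum_j\binom{j+n-1}{n-1}u^j$ and reading off $[u^{n(d-1)}]$ produces a sum over $k$ with $0\le k\le\lfloor n(d-1)/d\rfloor$. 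Substituting back into the displayed expression for $\dim_k{\mathcal R}$, the $k=0$ contribution cancels $\binom{n(d-1)+n-1}{n-1}$ and the $k=2$ contribution cancels $\binom{n+1}{2}\binom{n(d-1)-2d+n-1}{n-1}$, leaving exactly the sum on the right-hand side of (\ref{formdimmesR}).

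The main obstacle I anticipate is purely algebraic bookkeeping: verifying the binomial identity above, tracking signs throughout the generating-function expansion, and confirming that the upper summation bound $\lfloor n(d-1)/d\rfloor$ is forced by the nonnegativity requirement $n(d-1)-kd\ge 0$ in the binomial coefficient $\binom{n(d-1)-kd+n-1}{n-1}$. Once the Koszul input is in place, the rest is a single, essentially forced, generating-function computation.
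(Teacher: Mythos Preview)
Your argument is correct and takes a genuinely different---and considerably shorter---route than the paper. The paper proceeds by an explicit, hands-on recursion: it repeatedly applies the Koszul kernel description (Lemma~\ref{lineardepens1}) to parametrize $\mathcal{R}$ by tuples of skew-symmetric matrices, then parametrizes the kernel of that parametrization, and so on, introducing a tower of auxiliary spaces $\mathcal{W}_s$, $\mathcal{K}_s$, $\mathcal{W}_{s;r}$, $\mathcal{K}_{s;r}$, \dots\ . Summing the resulting telescoping dimension counts produces an expression (formula~(\ref{dimrpprefinal})) whose coefficients must then be identified with $(-1)^m(m-1)\binom{n+1}{m}$ via a separate combinatorial identity (\ref{identitytosolve3}), itself proved by induction in the appendix. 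Your approach replaces all of this by a single structural fact---freeness of the conormal module $I/I^2\cong (R/I)^{\oplus n}(-d)$ for a regular sequence---which yields $H_{R/I^2}(u)$ in closed form; the binomial identity $\binom{n}{k}-n\binom{n}{k-1}=-(k-1)\binom{n+1}{k}$ then does in one line what the paper's recursion and appendix identity do in several pages. The paper's method has the virtue of being entirely explicit and of producing, as a byproduct, the combinatorial identities (\ref{identitytosolve3}), (\ref{provenid1}), (\ref{provenid2}) (which the authors flag as independently interesting); your method is the kind of noncombinatorial shortcut the paper itself alludes to in Remark~\ref{alternpr}. The only point worth making fully explicit in a write-up is that $I=F^{\perp}$ coincides with the ideal $(f_1,\dots,f_n)$ (this is Remark~\ref{invsysrem}/Proposition~\ref{P:inverse-system}), so that indeed $R_{n(d-1)-2d}\,I_d^2=(I^2)_{n(d-1)}$.
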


\begin{proof} Let $\SSMat^{\ell}({\mathcal V})$ denote the vector space of skew-symmetric\linebreak $\ell \times \ell$-matrices with entries in a vector space ${\mathcal V}$. Clearly, $\SSMat^{\ell}({\mathcal V})$ is isomorphic to ${\mathcal V}^{\oplus\frac{\ell(\ell-1)}{2}}$. We need the following lemma: 

\begin{lemma}\label{lineardepens1}\it Consider the linear map
$$
\phi_{\rho}\co: k[x_1, \ldots, x_n]_{\rho}^{\oplus n} \to I_{\rho+d},\quad (h_1,\dots,h_n)\mapsto h_1 f_1+\cdots+h_n f_n,
$$
where $\rho\le n(d-1)-d$. Then
$
\ker(\phi_{\rho})=\left\{\bbf D\mid D\in \SSMat^n(k[x_1, \ldots, x_n]_{\rho-d})\right\}.
$
\end{lemma}

\begin{proof} The graded minimal free resolution of $I$ is known to coincide with the Koszul resolution
$
0\to {\mathcal F}_{n-1}\buildrel{d_{n-1}}\over{\to}\cdots\buildrel{d_2}\over{\to} {\mathcal F}_1\buildrel{d_1}\over{\to} {\mathcal F}_0\buildrel{d_0}\over{\to} I\to 0
$
(see, e.g., \cite[Theorem 1.1]{GVT}). Here one has
${\mathcal F}_0=k[x_1,\dots,x_n](-d)^{\oplus n}$, ${\mathcal F}_1=k[x_1,\dots,x_n](-2d)^{\oplus\frac{n(n-1)}{2}}$,
and we identify ${\mathcal F}_1$ with with $\SSMat^{n}(k[x_1,\dots,x_n](-2d))$. Upon this identification, the exactness condition $\im(d_1)=\ker(d_0)$ is precisely the assertion of the lemma.
\end{proof}

We will now continue the proof of the proposition. Write any relation in ${\mathcal R}$ as
$
\sum_{i=1}^n\left(\sum_{j=i}^n\alpha_{ij}f_j\right)f_i=0
$
(see (\ref{generlinrel})). By Lemma \ref{lineardepens1} with $\rho=n(d-1)-d$, there exists $D\in\SSMat^n(k[x_1,\dots,x_n]_{n(d-1)-2d})$ such that
$
{\bbf }\alpha={\bbf }D ,
$
where $\alpha$ is the lower-triangular $n\times n$-matrix whose entries are $\alpha_i^j:=\alpha_{ij}$, $i\le j$, with the upper and lower indices indicating the row and column numbers, respectively. Applying Lemma \ref{lineardepens1} to each column of the matrix $\alpha-D$ with $\rho=n(d-1)-2d$, we then see that there exist $D^j\in\SSMat^n(k[x_1,\dots,x_n]_{n(d-1)-3d})$, $j=1,\dots,n$, satisfying
\begin{equation}
\begin{array}{ll}
\displaystyle D_i^j=\sum_{{\ell}=1}^n(D^j)^{\ell}_if_{\ell},&  1\le i< j\le n,\\
\vspace{-0.5cm}\\
\displaystyle \alpha_{ij}-D_i^j=\sum_{{\ell}=1}^n(D^i)^{\ell}_jf_{\ell}, & 1\le i< j\le n,\\  
\vspace{-0.5cm}\\
\displaystyle \alpha_{ii}=\sum_{{\ell}=1}^n(D^i)^{\ell}_if_{\ell}, & i=1,\dots, n.\\
\end{array}\label{threesetsofeqs}
\end{equation}
The first two sets of equations in (\ref{threesetsofeqs}) imply
$\alpha_{ij}=\sum_{{\ell}=1}^n((D^i)^{\ell}_j+(D^j)^{\ell}_i)f_{\ell}$,\linebreak $1\le i< j\le n$, which together with the third set of equations shows that ${\mathcal R}$ is parametrized by the space $(\SSMat^n(k[x_1,\dots,x_n]_{n(d-1)-3d}))^{\oplus n}$. 

Let $q:=\left[\frac{n(d-1)}{d}\right]-1$. If $q=1$, the space ${\mathcal R}$ is trivial, which agrees with (\ref{formdimmesR}). Suppose that $q\ge 2$. Then the kernel of the parametrization of ${\mathcal R}$ consists of all $n$-tuples of matrices $(D^1,\dots,D^n)\in(\SSMat^n(k[x_1,\dots,x_n]_{n(d-1)-3d}))^{\oplus n}$ satisfying
\begin{equation}
\sum_{\ell=1}^n((D^i)^{\ell}_j+(D^j)^{\ell}_i)f_{\ell}=0, \quad 1\le i\le j\le n.\label{defw1alternative}
\end{equation}
By the skew-symmetricity of $D^i$ and Lemma \ref{lineardepens1}, identity (\ref{defw1alternative}) holds if and only if
\begin{equation}
R^j(D^i)+R^i(D^j)={\bbf}D^{ij}, \quad 1\le i\le j\le n,\label{cond1}
\end{equation}
for some $D^{ij}\in\SSMat^n(k[x_1,\dots,x_n]_{n(d-1)-4d})$, where for any matrix $M$ we denote by $R^i(M)$ its $i$th row. In fact, condition (\ref{cond1}) completely characterizes the vector space of all $n$-tuples $(D^1,\dots,D^n)\in(\SSMat^n(k[x_1,\dots,x_n]_{n(d-1)-3d}))^{\oplus n}$ that can occur in the right-hand side of the identities $R^i(D)={\bbf }D^i$, $i=1,\dots,n,$ for some $D\in \SSMat^n(k[x_1,\dots,x_n]_{n(d-1)-2d})$. We denote this vector space by ${\mathcal W}_1$.

Thus, we have
\begin{equation}
\begin{array}{l}
\displaystyle\dim_{k}{\mathcal R}=\frac{n^2(n-1)}{2}\dim_{k}k[x_1,\dots,x_n]_{n(d-1)-3d}-\dim_{k}{\mathcal W}_1=\\
\vspace{-0.3cm}\\
\displaystyle\hspace{3.5cm}\frac{n^2(n-1)}{2}{n(d-1)-3d+n-1 \choose n-1}-\dim_{k}{\mathcal W}_1,
\end{array}\label{dimRprefinal}
\end{equation}
and we will now find $\dim_{k}{\mathcal W}_1$. For any $1\le s\le q-1$ introduce the vector space ${\mathcal W}_s$ of ${s+n-1 \choose n-1}$-tuples of matrices $D^{i_1\cdots i_s}\in\SSMat^n(k[x_1,\dots,x_n]_{n(d-1)-(s+2)d})$, with $1\le i_1\le\dots\le i_s\le n$, satisfying
$$
\begin{array}{l}
R^p(D^{i_1\cdots i_s})+R^{i_1}(D^{p\, i_2\cdots i_s})+\dots+R^{i_s}(D^{p\, i_1\cdots i_{s-1}})={\bbf }D^{p\, i_1\dots i_s},\end{array}
$$
for all indices $1\le p\le i_1\le\dots\le i_s\le n$ and some skew-symmetric matrices $D^{p\, i_1\dots i_s}\in\SSMat^n(k[x_1,\dots,x_n]_{n(d-1)-(s+3)d})$. This is exactly the vector space of ${s+n-1 \choose n-1}$-tuples of matrices that can occur in the right-hand side of the identities
$$
\makebox[250pt]{$\begin{array}{l}
R^t(D^{j_1\cdots j_{s-1}})+R^{j_1}(D^{t\, j_2\cdots j_{s-1}})+\dots+R^{j_{s-1}}(D^{t\, j_1\cdots j_{s-2}})={\bbf} D^{t\, j_1\dots j_{s-1}},
\end{array}$}\label{aux7}
$$
for all indices $1\le t\le j_1\le\dots\le j_{s-1}\le n$ and some skew-symmetric matrices $D^{\ell_1\dots \ell_{s-1}}\in\SSMat^n(k[x_1,\dots,x_n]_{n(d-1)-(s+1)d})$. Let ${\mathcal K}_s$ be the subspace of ${\mathcal W}_s$ defined by the condition ${\bbf}D^{i_1\cdots i_s}=0$ for all $1\le i_1\le\dots\le i_s\le n$. Notice that ${\mathcal K}_{q-1}=0$ by Lemma \ref{lineardepens1}.

Also, for any $1\le s\le q-1$ consider the vector space of ${s+n-1 \choose n-1}$-tuples of matrices $A^{i_1\cdots i_s}\in\SSMat^n(k)$, with $1\le i_1\le\dots\le i_s\le n$, such that
$$
\makebox[250pt]{$\begin{array}{l}
R^p(A^{i_1\cdots i_s})+R^{i_1}(A^{p\, i_2\cdots i_s})+\dots+R^{i_s}(A^{p\, i_1\cdots i_{s-1}})=0,\\
\end{array}$}
$$
for all $1\le p\le i_1\le\dots\le i_s\le n$. The dimension $\delta_s$ of this space is not hard to find:
\begin{equation}
\delta_s=\frac{s(s+1)}{2}{n+s-1 \choose s+2}, \label{deltas}
\end{equation}
and we have   
\begin{equation}
\begin{array}{l}
\dim_{k}{\mathcal W}_{q-1}=\delta_{q-1}\dim_{k}k[x_1,\dots,x_n]_{n(d-1)-(q+1)d},\\
\vspace{-0.1cm}\\
\dim_{k}{\mathcal W}_{s}=\delta_s\dim_{k}k[x_1,\dots,x_n]_{n(d-1)-(s+2)d}+\dim_{k}{\mathcal W}_{s+1}-,\\
\vspace{-0.3cm}\\
\displaystyle\hspace{6cm}\dim_{k}{\mathcal K}_{s+1},\quad s=1,\dots, q-2.
\end{array}\label{dimkercalw}
\end{equation}

Formula (\ref{dimkercalw}) yields
\begin{equation}
\dim_{k}{\mathcal W}_{1}=\sum_{s=1}^{q-1}\delta_s\dim_{k}k[x_1,\dots,x_n]_{n(d-1)-(s+2)d}-\sum_{r=2}^{q-2}\dim_{k}{\mathcal K}_r.\label{dimk01}
\end{equation}

Further, if $q\ge 4$ one can compute $\dim_{k}{\mathcal K}_r$ for every $r=2,\dots,q-2$ by utilizing vector spaces similar to ${\mathcal W}_1,\dots,{\mathcal W}_{q-1}$ as follows. For any pair of integers $s\ge 1$, $r\ge 2$, with $s+r\le q-1$, we introduce the vector space ${\mathcal W}_{s;r}$ of ${s+n-1 \choose n-1}\times{r+n-1 \choose n-1}$-tuples of matrices $D^{i_1\cdots i_s;\iota_1\dots\iota_r}$ in $\SSMat^n(k[x_1,\dots,x_n]_{n(d-1)-(s+r+2)d})$, $1\le i_1\le\dots\le i_s\le n$, $1\le \iota_1\le\dots\le \iota_r\le n$, satisfying
$$
\begin{array}{l}
\hspace{-0.2cm}R^p(D^{i_1\cdots i_s;\iota_1,\dots\iota_r})+R^{i_1}(D^{p\, i_2\cdots i_s;\iota_1\dots\iota_r})+\hspace{-0.08cm}\cdots\hspace{-0.08cm}+R^{i_s}(D^{p\, i_1\cdots i_{s-1};\iota_1\dots\iota_r})\hspace{-0.1cm}=\hspace{-0.1cm}{\bbf}D^{p\, i_1\dots i_s;\iota_1\dots\iota_r},
\end{array}
$$
for all indices $1\le p\le i_1\le\dots\le i_s\le n$, $1\le \iota_1\le\dots\le \iota_r\le n$ and some $D^{p\, i_1\dots i_s;\iota_1\dots\iota_r}\in\SSMat^n(k[x_1,\dots,x_n]_{n(d-1)-(s+r+3)d})$. This is exactly the vector space of ${s+n-1 \choose n-1}\times{r+n-1 \choose n-1}$-tuples of matrices that can occur in the right-hand side of the identities
$$
\begin{array}{l}
R^t(D^{j_1\cdots j_{s-1};\iota_1\dots\iota_r})+R^{j_1}(D^{t\, j_2\cdots j_{s-1};\iota_1\dots\iota_r})+\dots+\\
\vspace{-0.4cm}\\
\hspace{6cm}R^{j_{s-1}}(D^{t\, j_1\cdots j_{s-2};\iota_1\dots\iota_r})={\bbf}D^{t\, j_1\dots j_{s-1};\iota_1\dots\iota_r},\end{array}
$$
for all indices $1\le t\le j_1\le\dots\le j_{s-1}\le n$, $1\le \iota_1\le\dots\le \iota_r\le n$ and some $D^{\ell_1\dots \ell_{s-1};\iota_1\dots\iota_r}\in\SSMat^n(k[x_1,\dots,x_n]_{n(d-1)-(s+r+1)d})$. Further, let ${\mathcal K}_{s;r}$ be the subspace of ${\mathcal W}_{s;r}$ defined by the condition
${\bbf}D^{i_1\cdots i_s;\iota_1\dots\iota_r}=0$ for all $1\le i_1\le\dots\le i_s\le n$, $1\le \iota_1\le\dots\le \iota_r\le n$.
Notice that by Lemma \ref{lineardepens1} one has ${\mathcal K}_{s,r}=0$ if $s+r=q-1$.

For every $2\le r\le q-2$ we have
\vspace{-0.05cm}
\begin{equation}
\dim_{k}{\mathcal W}_{q-r-1;r}=\delta_{q-r-1}{n+r-1\choose n-1}\dim_{k}k[x_1,\dots,x_n]_{n(d-1)-(q+1)d},\label{dimwrs}
\end{equation}
and for every $s=1,\dots, q-r-2$ we have
\vspace{-0.05cm}
\begin{equation}
\makebox[250pt]{$\begin{array}{l}
\displaystyle\hspace{1.5cm}\dim_{k}{\mathcal W}_{s;r}=\displaystyle\delta_s{n+r-1\choose n-1}\dim_{k}k[x_1,\dots,x_n]_{n(d-1)-(s+r+2)d}+\\
\vspace{-0.3cm}\\
\hspace{8.4cm}\displaystyle\dim_{k}{\mathcal W}_{s+1;r}-\dim_{k}{\mathcal K}_{s+1;r}.
\end{array}$}\label{dimrrr}
\end{equation}
Now, observe that ${\mathcal K}_r$, $r=2,\dots,q-2$ is parametrized by ${\mathcal W}_{1;r}$, with ${\mathcal K}_{1;r}$ being the kernel of this parametrization. Then (\ref{dimk01}), (\ref{dimwrs}), (\ref{dimrrr}) yield 
$$
\begin{array}{l}
\displaystyle\dim_{k}{\mathcal W}_1=\sum_{s=1}^{q-1}\delta_s\dim_{k}k[x_1,\dots,x_n]_{n(d-1)-(s+2)d}-\\
\vspace{-0.3cm}\\
\displaystyle\hspace{2.5cm}\sum_{r=2}^{q-2}{n+r-1\choose n-1}\sum_{s=1}^{q-r-1}\delta_s\dim_{k}k[x_1,\dots,x_n]_{n(d-1)-(s+r+2)d}+\\
\vspace{-0.5cm}\\
\displaystyle\hspace{8.4cm}\sum_{s\ge 1,r\ge 2,\, s+r\le q-2}\dim_{k}{\mathcal K}_{s;r}. 
\end{array}
$$

In order to determine $\dim_{k}{\mathcal K}_{s;r}$, with $s\ge 1$, $r\ge 2$, $s+r\le q-2$, one introduces further vector spaces analogous to ${\mathcal W}_s$, ${\mathcal W}_{s;r}$ by adding more indices to skew-symmetric matrices with elements in an appropriate space. Continuing this process, we arrive at the following formula:
\begin{equation}
\makebox[250pt]{$\begin{array}{l}
\displaystyle\dim_{k}{\mathcal W}_1=\sum_{s=1}^{q-1}\delta_s\dim_{k}k[x_1,\dots,x_n]_{n(d-1)-(s+2)d}+\\
\vspace{-0.6cm}\\
\displaystyle\hspace{3cm}\sum_{\ell=1}^{q-3}(-1)^{\ell}\hspace{-0.7cm}\sum_{\scalebox{0.5}{$\begin{array}{c}
r_1+\dots+r_{\ell}+s\le q-1,\\
r_1,\dots,r_{\ell-1}\ge 1,r_{\ell}\ge 2,s\ge 1
\end{array}$}}\hspace{-0.5cm}{n+r_1-1\choose n-1}\cdots{n+r_{\ell}-1\choose n-1}\times\\
\vspace{-0.3cm}\\
\displaystyle\hspace{5cm}\delta_s\dim_{k}k[x_1,\dots,x_n]_{n(d-1)-(s+r_1+\dots+r_{\ell}+2)d}=\\
\vspace{-0.3cm}\\
\displaystyle\hspace{0.5cm}\sum_{m=3}^{q+1}\left[\delta_{m-2}+\sum_{\ell=1}^{m-4}(-1)^{\ell}\hspace{-0.7cm}\sum_{\scalebox{0.5}{$\begin{array}{c}r_1+\dots+r_{\ell}+s=m-2,\\
r_1,\dots,r_{\ell-1}\ge 1,r_{\ell}\ge 2,s\ge 1
\end{array}$}}{n+r_1-1\choose n-1}\cdots {n+r_{\ell}-1\choose n-1}\delta_s\right]\times\\
\vspace{-0.5cm}\\
\hspace{8cm}\dim_{k}k[x_1,\dots,x_n]_{n(d-1)-md}.
\end{array}$}\label{dimkw1prefinal}
\end{equation}   
From (\ref{dimRprefinal}), (\ref{deltas}), (\ref{dimkw1prefinal}) we obtain
\begin{equation}
\hspace{-0.35cm}\makebox[250pt]{$\begin{array}{l}
\displaystyle\dim_{k}{\mathcal R}=2{n+1\choose 3}\dim_{k}k[x_1,\dots,x_n]_{n(d-1)-3d}-\\
\vspace{-0.3cm}\\
\hspace{6cm}\displaystyle3{n+1\choose 4}\dim_{k}k[x_1,\dots,x_n]_{n(d-1)-4d}-\\
\vspace{-0.3cm}\\
\hspace{1cm}\displaystyle\sum_{m=5}^{q+1}\left[\delta_{m-2}+\sum_{\ell=1}^{m-4}(-1)^{\ell}\hspace{-0.7cm}\sum_{\scalebox{0.5}{$\begin{array}{c}r_1+\dots+r_{\ell}+s=m-2,\\
r_1,\dots,r_{\ell}\ge 1,r_{\ell}\ge 2,s\ge 1
\end{array}$}}{n+r_1-1\choose n-1}\cdots{n+r_{\ell}-1\choose n-1}\delta_s\right]\times\\
\vspace{-0.4cm}\\
\hspace{8cm}\dim_{k}k[x_1,\dots,x_n]_{n(d-1)-md}.
\end{array}$}\label{dimrpprefinal}
\end{equation}
For $q=2,3$ formula (\ref{dimrpprefinal}) is easily seen to agree with (\ref{formdimmesR}) as required. For $q\ge 4$, the following identity will be established in the appendix below:
\begin{equation}
\makebox[250pt]{$\begin{array}{l}
\displaystyle\delta_{m-2}+\sum_{\ell=1}^{m-4}(-1)^{\ell}\hspace{-0.7cm}\sum_{\scalebox{0.5}{$\begin{array}{c}r_1+\dots+r_{\ell}+s=m-2,\\
r_1,\dots,r_{\ell-1}\ge 1,r_{\ell}\ge 2,s\ge 1
\end{array}$}}{n+r_1-1\choose n-1}\cdots{n+r_{\ell}-1\choose n-1}\delta_s=\\
\vspace{-0.4cm}\\
\displaystyle\hspace{4cm}(-1)^m(m-1){n+1\choose m}, \quad m=5,\dots,q+1.
\end{array}$}\label{identitytosolve3}
\end{equation}
Formulas (\ref{dimrpprefinal}) and (\ref{identitytosolve3}) clearly imply (\ref{formdimmesR}). The proof of the proposition is complete.\end{proof}

We will now finalize the proof of the theorem. By formulas (\ref{dimviaR}), (\ref{formdimmesR}), the left-hand side of (\ref{desiredestim}) is equal to
\begin{equation}
\begin{array}{l}
\displaystyle\sum_{m=0}^{\left[\frac{n(d-1)}{d}\right]}(-1)^{m-1}(m-1){n+1 \choose m}{nd-md-1 \choose n-1}.
\end{array}\label{dimt2}
\end{equation}
The fact that expression (\ref{dimt2}) coincides with the integer $N$ defined in (\ref{dim1}) is a consequence of the identities
\begin{equation}
\begin{array}{l}
\displaystyle\sum_{m=0}^{\left[\frac{n(d-1)}{d}\right]}(-1)^{m}{n+1 \choose m}{nd-md \choose n}={d+n-1 \choose n}
\end{array}\label{provenid1}
\end{equation}
and
\begin{equation}
\displaystyle\sum_{m=0}^{\left[\frac{n(d-1)}{d}\right]}(-1)^{m}{n \choose m}{nd-md-1 \choose n-1}=1,\label{provenid2}
\end{equation}
which will be verified in the appendix below. 

Indeed, using (\ref{provenid1}), (\ref{provenid2}) we have
$$
\begin{array}{l}
\displaystyle N= n {d+n-1\choose n-1}-n^2+1=\frac{n^2}{d} {d+n-1\choose n}-n^2+1=\\
\vspace{-0.3cm}\\
\displaystyle\frac{n^2}{d}\sum_{m=0}^{\left[\frac{n(d-1)}{d}\right]}(-1)^{m}{n+1 \choose m}{nd-md \choose n}-n^2+1=\\
\vspace{-0.3cm}\\
\displaystyle n \sum_{m=0}^{\left[\frac{n(d-1)}{d}\right]}(-1)^{m}(n-m){n+1 \choose m}{nd-md-1 \choose n-1}-\\
\vspace{-0.3cm}\\
\displaystyle (n^2-1)\sum_{m=0}^{\left[\frac{n(d-1)}{d}\right]}(-1)^{m}{n \choose m}{nd-md-1 \choose n-1}=\\
\vspace{-0.3cm}\\
\displaystyle n \sum_{m=0}^{\left[\frac{n(d-1)}{d}\right]}(-1)^{m}(n-m){n+1 \choose m}{nd-md-1 \choose n-1}-\\
\vspace{-0.3cm}\\
\displaystyle (n-1)\sum_{m=0}^{\left[\frac{n(d-1)}{d}\right]}(-1)^{m}(n-m+1){n+1 \choose m}{nd-md-1 \choose n-1}=\\
\vspace{-0.3cm}\\
\displaystyle\sum_{m=0}^{\left[\frac{n(d-1)}{d}\right]}(-1)^{m-1}(m-1){n+1 \choose m}{nd-md-1 \choose n-1},
\end{array}
$$
and this is indeed the expression shown in (\ref{dimt2}). The proof is complete. \end{proof}

\begin{remark}\label{alternpr}
As one of the referees pointed out, one can also obtain identity (\ref{desiredestim}) by a noncombinatorial argument based on standard facts such as \cite[Theorem 1.1.8]{BH}. We further note that one can give another noncombinatorial proof of (\ref{desiredestim}) using \cite[Remark 3.10]{IK}.
\end{remark}

\renewcommand{\theequation}{A.\arabic{equation}} 
\renewcommand{\thesubsection}{A.\arabic{subsection}}

\section*{Appendix: Some combinatorial identities}\label{combinatorialidentities}
\setcounter{equation}{0}

Our arguments in Section \ref{results} depend on certain combinatorial identities, namely, (\ref{identitytosolve3}), (\ref{provenid1}), (\ref{provenid2}). These identities are established below. We acknowledge that the proof for (\ref{provenid2}) has been communicated to us by Marko Riedel who kindly gave us permission to use it in our work. We start by reproducing his argument.

\subsection{Identity (\ref{provenid2})} We will show that
\begin{equation}
\displaystyle\sum_{m=0}^{\left[\frac{p(r-1)}{r}\right]}(-1)^{m}{p \choose m}{pr-mr-1 \choose p-1}=1\label{proved1}
\end{equation}
holds for all integers $p,r\ge 1$. Indeed, for any $0<\varepsilon<1$ write
$$
{pr-mr-1 \choose p-1}=\frac{1}{2\pi i}\int_{|z|=\varepsilon}\frac{(1+z)^{(p-m)r-1}}{z^p}dz,\quad \hbox{$m=0,\dots, \displaystyle\left[\frac{p(r-1)}{r}\right]$},
$$
which yields
$$
\begin{array}{l}
\displaystyle\sum_{m=0}^{\left[\frac{p(r-1)}{r}\right]}(-1)^{m}{p \choose m}{pr-mr-1 \choose p-1}=\\
\vspace{-0.3cm}\\
\displaystyle\sum_{m=0}^{\left[\frac{p(r-1)}{r}\right]}(-1)^{m}{p \choose m}\frac{1}{2\pi i}\int_{|z|=\varepsilon}\frac{(1+z)^{(p-m)r-1}}{z^p}dz=\\
\vspace{-0.3cm}\\
\displaystyle\frac{1}{2\pi i}\int_{|z|=\varepsilon}\left[\frac{(1+z)^{pr-1}}{z^p}\sum_{m=0}^{\left[\frac{p(r-1)}{r}\right]}(-1)^{m}{p \choose m}\frac{1}{(1+z)^{mr}}\right]dz=\\
\vspace{-0.3cm}\\
\displaystyle\frac{1}{2\pi i}\int_{|z|=\varepsilon}\left[\frac{(1+z)^{pr-1}}{z^p}\sum_{m=0}^p(-1)^{m}{p \choose m}\frac{1}{(1+z)^{mr}}\right]dz-\\
\vspace{-0.3cm}\\
\displaystyle\frac{1}{2\pi i}\int_{|z|=\varepsilon}\left[\frac{(1+z)^{pr-1}}{z^p}(-1)^p\frac{1}{(1+z)^{pr}}\right]dz=\\
\vspace{-0.3cm}\\
\displaystyle\frac{1}{2\pi i}\int_{|z|=\varepsilon}\left[\frac{(1+z)^{pr-1}}{z^p}\left(1-\frac{1}{(1+z)^r}\right)^p\right]dz+1=\\
\vspace{-0.3cm}\\
\displaystyle\frac{1}{2\pi i}\int_{|z|=\varepsilon}\frac{((1+z)^r-1)^p}{(1+z)z^p}dz+1=1
\end{array}
$$
as required. In the above calculation we utilized the obvious facts that
$$
\displaystyle\int_{|z|=\varepsilon}\left[\frac{(1+z)^{pr-1}}{z^p}\,\frac{1}{(1+z)^{mr}}\right]dz=0 \quad \hbox{for $\displaystyle\left[\frac{p(r-1)}{r}\right]<m<p$}
$$
and
$$
\displaystyle\frac{1}{2\pi i}\int_{|z|=\varepsilon}\left[\frac{(1+z)^{pr-1}}{z^p}\,\frac{1}{(1+z)^{pr}}\right]dz=(-1)^{p-1}.
$$

Note that for $r=2$  identity (\ref{proved1}) appears in \cite[formula (3.111)]{G} and that for this special case it was stated by B.~C.~Wong in {\it Amer.~Math. Monthly} in May 1930 as an open question.

\subsection{Identity (\ref{provenid1})} We will show that
\begin{equation}
\displaystyle\sum_{m=0}^{\left[\frac{p(r-1)}{r}\right]}(-1)^{m}{p+1 \choose m}{pr-mr \choose p}={p+r-1 \choose p}\label{proved2}
\end{equation}
holds for all integers $p,r\ge 1$. Our argument is similar to that used for obtaining (\ref{proved1}). Indeed, for any $0<\varepsilon<1$ write
$$
{pr-mr \choose p}=\frac{1}{2\pi i}\int_{|z|=\varepsilon}\frac{(1+z)^{(p-m)r}}{z^{p+1}}dz,\quad \hbox{$m=0,\dots, \displaystyle\left[\frac{p(r-1)}{r}\right]$},
$$
which yields
$$
\begin{array}{l}
\displaystyle\sum_{m=0}^{\left[\frac{p(r-1)}{r}\right]}(-1)^{m}{p+1 \choose m}{pr-mr \choose n}=\\
\vspace{-0.3cm}\\
\displaystyle\sum_{m=0}^{\left[\frac{p(r-1)}{r}\right]}(-1)^{m}{p+1 \choose m}\frac{1}{2\pi i}\int_{|z|=\varepsilon}\frac{(1+z)^{(p-m)r}}{z^{p+1}}dz=\\
\vspace{-0.3cm}\\
\displaystyle\frac{1}{2\pi i}\int_{|z|=\varepsilon}\left[\frac{(1+z)^{pr}}{z^{p+1}}\sum_{m=0}^{\left[\frac{p(r-1)}{r}\right]}(-1)^{m}{p+1 \choose m}\frac{1}{(1+z)^{mr}}\right]dz=\\
\vspace{-0.3cm}\\
\displaystyle\frac{1}{2\pi i}\int_{|z|=\varepsilon}\left[\frac{(1+z)^{pr}}{z^{p+1}}\sum_{m=0}^{p+1}(-1)^{m}{p+1 \choose m}\frac{1}{(1+z)^{mr}}\right]dz-\\
\vspace{-0.3cm}\\
\displaystyle\frac{1}{2\pi i}\int_{|z|=\varepsilon}\left[\frac{(1+z)^{pr}}{z^{p+1}}(-1)^{p+1}\frac{1}{(1+z)^{(p+1)r}}\right]dz=\\
\vspace{-0.3cm}\\
\displaystyle\frac{1}{2\pi i}\int_{|z|=\varepsilon}\left[\frac{(1+z)^{pr}}{z^{p+1}}\left(1-\frac{1}{(1+z)^r}\right)^{p+1}\right]dz+{p+r-1 \choose p}=\\
\vspace{-0.3cm}\\
\displaystyle\frac{1}{2\pi i}\int_{|z|=\varepsilon}\frac{((1+z)^r-1)^{p+1}}{(1+z)^rz^{p+1}}dz+{p+r-1 \choose p}={p+r-1 \choose p}
\end{array}
$$
as required. In the above calculation we utilized the facts that
$$
\displaystyle\int_{|z|=\varepsilon}\left[\frac{(1+z)^{pr}}{z^{p+1}}\,\frac{1}{(1+z)^{mr}}\right]dz=0 \quad \hbox{for $\displaystyle\left[\frac{p(r-1)}{r}\right]<m<p+1$}
$$
and
$$
\displaystyle\frac{1}{2\pi i}\int_{|z|=\varepsilon}\left[\frac{(1+z)^{pr}}{z^{p+1}}\,\frac{1}{(1+z)^{(p+1)r}}\right]dz=(-1)^p{p+r-1 \choose p}.
$$
Note that identity (\ref{proved2}) appears in \cite[formula (3.113)]{G}.

\subsection{Identity (\ref{identitytosolve3})} We assume that $n\ge 5$ and show that (\ref{identitytosolve3}) holds for all $m=5,\dots,n+1$ by induction on $m$. For $m=5$ both sides of (\ref{identitytosolve3}) are equal to $\displaystyle-4{n+1\choose 5}$, thus we assume that $6\le m\le n+1$. Using the induction hypothesis, we write the left-hand side of (\ref{identitytosolve3}) as
$$
\makebox[250pt]{$\begin{array}{l}
\displaystyle\delta_{m-2}-\hspace{-0.5cm}\sum_{\scalebox{0.5}{$\begin{array}{c}r+s=m-2,\\
r\ge 2,s\ge 1
\end{array}$}}{n+r-1\choose n-1}\delta_s+\\
\vspace{-0.3cm}\\
\displaystyle n\sum_{\ell=2}^{m-4}(-1)^{\ell}\hspace{-0.5cm}\sum_{\scalebox{0.5}{$\begin{array}{c}r_1+\dots+r_{\ell-2}+r_{\ell}+s=m-3,\\
r_1,\dots,r_{\ell-2}\ge 1,r_{\ell}\ge 2,s\ge 1
\end{array}$}}\hspace{-0.3cm}{n+r_1-1\choose n-1}\cdots{n+r_{\ell-2}-1\choose n-1}\times\\
\vspace{-0.3cm}\\
\displaystyle{n+r_{\ell}-1\choose n-1}\delta_s+{n+1\choose 2}\sum_{\ell=2}^{m-5}(-1)^{\ell}\hspace{-0.5cm}\sum_{\scalebox{0.5}{$\begin{array}{c}r_1+\dots+r_{\ell-2}+r_{\ell}+s=m-4,\\
r_1,\dots,r_{\ell-2}\ge 1,r_{\ell}\ge 2,s\ge 1
\end{array}$}}\hspace{-0.3cm}{n+r_1-1\choose n-1}\cdots\times\\
\end{array}$}
$$
\begin{equation}
\makebox[250pt]{$\begin{array}{l}
\displaystyle{n+r_{\ell-2}-1\choose n-1}\displaystyle{n+r_{\ell}-1\choose n-1}\delta_s+\dots+{n+m-6\choose m-5}{n+1\choose 2}\delta_1=\\
\vspace{-0.3cm}\\
\displaystyle \delta_{m-2}-\hspace{-0.3cm}\sum_{\scalebox{0.5}{$\begin{array}{c}r+s=m-2,\\
r\ge 2,s\ge 1
\end{array}$}}{n+r-1\choose n-1}\delta_s+\sum_{\rho=1}^{m-5}{n+\rho-1\choose n-1}\delta_{m-2-\rho}+\\
\vspace{-0.3cm}\\
\hspace{-0.0cm}\displaystyle (-1)^{m+1}\sum_{p=1}^{m-5}(-1)^p{n+1\choose m-p}(m-p-1) {n+p-1\choose p}=\\
\vspace{-0.3cm}\\
\displaystyle\delta_{m-2}-{n+m-4\choose m-3}\delta_1-{n+m-5\choose m-4}\delta_2+n\delta_{m-3}+\\
\vspace{-0.3cm}\\
\hspace{-0.0cm}\displaystyle (-1)^{m+1}\sum_{p=1}^{m-5}(-1)^p{n+1\choose m-p}(m-p-1){n+p-1\choose p}.
\end{array}$}\label{aux7778}
\end{equation}

To find the sum $\sum_{p=1}^{m-5}(-1)^p{n+1\choose m-p}(m-p-1){n+p-1\choose p}$, we will now show
\begin{equation}
\sum_{p=0}^{m}(-1)^p{n+1\choose m-p}{n+p-1\choose p}=0\label{aux788}
\end{equation}
and 
\begin{equation}
\sum_{p=0}^{m}(-1)^p{n+1\choose m-p}p{n+p-1\choose p}=0.\label{aux778}
\end{equation}

To obtain (\ref{aux788}), let us find the coefficient $C_m$ at $t^m$ in the expression\linebreak
$(1+t)^{n+1}\left(\sum_{\ell=0}^{\infty}(-t)^{\ell}\right)^n$.
An easy calculation shows that $C_m$ is exactly the left-hand side of (\ref{aux788}). On the other hand, since 
$$
(1+t)^{n+1}\left(\sum_{\ell=0}^{\infty}(-t)^{\ell}\right)^n=(1+t)^{n+1}\frac{1}{(1+t)^n}=1+t,
$$
we have $C_m=0$, which establishes (\ref{aux788}). Next, to obtain (\ref{aux778}), let us find the coefficient $C_m'$ at $t^m$ in the expression
$(1+t)^{n+1} \,t\left[\left(\sum_{\ell=0}^{\infty}(-t)^{\ell}\right)^n\right]'$.
It is easy to see that $C_m'$ is the left-hand side of (\ref{aux778}). On the other hand, 
$$
\begin{array}{l}
\displaystyle(1+t)^{n+1}\, t\left[\left(\sum_{\ell=0}^{\infty}(-t)^{\ell}\right)^n\right]'=(1+t)^{n+1}\, t\, \left[\frac{1}{(1+t)^n}\right]'=\\
\vspace{-0.3cm}\\
\displaystyle\hspace{3cm}-n(1+t)^{n+1}\, t\, \frac{1}{(1+t)^{n+1}}=-nt,
\end{array}
$$
hence we have $C_m'=0$, which establishes (\ref{aux778}).

Identities (\ref{aux788}), (\ref{aux778}) yield
$$
\sum_{p=0}^m(-1)^p{n+1\choose m-p}(m-p-1){n+p-1\choose p}=0.
$$
Therefore, (\ref{aux7778}) implies that the left-hand side of (\ref{identitytosolve3}) is equal to
$$
\begin{array}{l}
\displaystyle\delta_{m-2}-{n+m-4\choose m-3}\delta_1-{n+m-5\choose m-4}\delta_2+n\delta_{m-3}+\\
\end{array}
$$
\begin{equation}
\begin{array}{l}
\displaystyle(-1)^m\left[(m-1){n+1\choose m}+3(-1)^{m}{n+m-5\choose m-4}{n+1\choose 4}+\right.\\
\vspace{-0.3cm}\\
\displaystyle\left.2(-1)^{m+1}{n+m-4\choose m-3}{n+1\choose 3}+(-1)^{m}{n+m-3\choose m-2}{n+1\choose 2}+\right.\\
\vspace{-0.3cm}\\
\displaystyle\left.(-1)^{m+1}{n+m-1\choose m}\right]=(-1)^m(m-1){n+1\choose m}+\delta_{m-2}-\\
\vspace{-0.3cm}\\
\displaystyle{n+m-4\choose m-3}\delta_1+n\delta_{m-3}-2{n+m-4\choose m-3}{n+1\choose 3}+\\
\vspace{-0.3cm}\\
\displaystyle{n+m-3\choose m-2}{n+1\choose 2}-{n+m-1\choose m}.
\end{array}\label{aux8888}
\end{equation}
Using (\ref{deltas}) it is easy to show
$$
\begin{array}{l}
\displaystyle\delta_{m-2}-{n+m-4\choose m-3}\delta_1+n\delta_{m-3}-2{n+m-4\choose m-3}{n+1\choose 3}+\\
\vspace{-0.3cm}\\
\displaystyle\hspace{4cm}{n+m-3\choose m-2}{n+1\choose 2}-{n+m-1\choose m}=0,
\end{array}
$$
hence (\ref{aux8888}) yields that the left-hand side of (\ref{identitytosolve3}) is equal to $\displaystyle(-1)^m(m-1){n+1\choose m}$ as required.

\end{document}